 \newtheorem{thm}{Theorem}[section]
 \newtheorem{conjecture}[thm]{Conjecture}
 \newtheorem{lem}[thm]{Lemma}
 \theoremstyle{definition}
 \theoremstyle{remark}
 \numberwithin{equation}{section}
\begin{document}
%
%
%
%
%
\submitted{November 15, 2004}
%
%
%
%
\title
{Odd pairs of cliques}
\author[Burlet]{Michel Burlet}
\email{michel.burlet@imag.fr}


\author[Maffray]{Fr\'ed\'eric Maffray}
%
\email{frederic.maffray@imag.fr}

\author[Trotignon]{Nicolas Trotignon}
%
\email{nicolas.trotignon@imag.fr}

\address{%
\mbox{}\vspace{1ex}\\
Laboratoire Leibniz\\
46 avenue F\'elix Viallet\\
38031 Grenoble cedex\\
France}


\keywords{Perfect graph, graph, even pair}

\date{November 15, 2004}

\begin{abstract}
A graph is Berge if it has no induced odd cycle on at least 5~vertices
and no complement of induced odd cycle on at least 5~vertices. A graph
is perfect  if the chromatic  number equals the maximum  clique number
for every induced subgraph.  Chudnovsky, Robertson, Seymour and Thomas
proved that every Berge graph  either falls into some classical family
of perfect graphs,  or has a structural fault that  cannot occur in a
minimal imperfect  graph. A  corollary of this  is the  strong perfect
graph theorem conjectured by Berge:  every Berge graph is perfect.  An
even pair of vertices in a graph is a pair of vertices such that every
induced  path between  them has  even  length. Meyniel  proved that  a
minimal imperfect graph cannot contain an even pair. So even pairs may
be considered  as a  structural fault.  Chudnovsky  et al. do  not use
them, and  it is known  that some classes  of Berge graph have no
even pairs.

The aim  of this work  is to investigate an  ``even-pair-like'' notion
that could be a structural fault  present in every Berge graph. An odd
pair of  cliques is a pair  of cliques $\{K_1, K_2\}$  such that every
induced path from $K_1$ to $K_2$  with no interior vertex in $K_1 \cup
K_2$ has odd length.  We conjecture  that for every Berge graph $G$ on
at least  two vertices,  either one of  $G, \overline{G}$ has  an even
pair, or  one of  $G, \overline{G}$  has an odd  pair of  cliques.  We
conjecture that a  minimal imperfect graph has no  odd pair of maximal
cliques.  We prove  these conjectures in some special  cases.  We show
that adding  all edges between any 2~vertices of the cliques  of an odd
pair of cliques is an operation that preserves perfectness.
\end{abstract}

\maketitle


\section{Introduction}

In  this paper  graphs  are  simple, non-oriented,  with  no loop  and
finite.  Several definitions that can  be found in most handbooks (for
instance~\cite{diestel:graph})  will not  be  given.  A  graph $G$  is
\emph{perfect}  if  every  induced  subgraph  $G'$  of  $G$  satisfies
$\chi(G')=\omega(G')$,  where $\chi(G')$  is the  chromatic  number of
$G'$   and  $\omega(G')$  is   the  maximum   clique  size   in  $G'$.
Berge~\cite{berge:60,berge:61}    introduced   perfect    graphs   and
conjectured  that the  complement  of  a perfect  graph  is a  perfect
graph. This conjecture was proved by Lov\'asz:

\begin{thm}[Lov\'asz, \cite{lovasz:nh,lovasz:pg}]
  \label{th:lovasz}
  The complement of every perfect graph is a perfect graph.
\end{thm}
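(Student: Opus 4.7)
The plan is to follow Lov\'asz's classical approach via the \emph{Replication Lemma}: if $G$ is perfect and $v\in V(G)$, then the graph $G'$ obtained by adding a new vertex $v'$ adjacent to $v$ and to every neighbor of $v$ in $G$ is again perfect. Granted this lemma, a counting argument using vertex multiplications yields the theorem.

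I would first prove the Replication Lemma by induction on $|V(G)|$. Any induced subgraph of $G'$ either omits $v'$ (and is then an induced subgraph of $G$), omits $v$ (and is then isomorphic to an induced subgraph of $G$ via $v\leftrightarrow v'$), or contains both and is a proper subgraph of $G'$ (handled by induction); it therefore suffices to show $\chi(G')=\omega(G')$. If $v$ lies in a maximum clique of $G$ then $\omega(G')=\omega(G)+1$, and any $\omega(G)$-coloring of $G$ extends to $G'$ by giving $v'$ a new color. Otherwise $\omega(G')=\omega(G)$ and $\omega(G\setminus v)=\omega(G)$, so by perfection $\chi(G\setminus v)=\omega(G)$; hence in every $\omega(G)$-coloring of $G$ the color class $A$ containing $v$ must satisfy $|A|\ge 2$ (else the remaining $\omega(G)-1$ classes would give a forbidden coloring of $G\setminus v$). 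Set $B:=A\setminus\{v\}\ne\emptyset$. Every maximum clique of $G$ avoids $v$ but uses all $\omega(G)$ colors, so it meets $B$; hence $\omega(G\setminus B)\le\omega(G)-1$, and perfectness supplies an $(\omega(G)-1)$-coloring of $G\setminus B$. Adjoining $B\cup\{v'\}$ as a final color class, which is independent because $B$ lies in the independent set $A$ and $v'$ has no neighbor in $A\setminus\{v\}$, completes an $\omega(G')$-coloring of $G'$.

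With the lemma in hand, I would prove the theorem by induction on $|V(G)|$. To conclude that $\overline{G}$ is perfect it suffices, using the induction hypothesis on every proper induced subgraph of $G$, to exhibit a partition of $V(G)$ into $\alpha(G)$ cliques. The key claim is that some clique $K$ of $G$ satisfies $\alpha(G\setminus K)<\alpha(G)$: once this is known, $G\setminus K$ is a smaller perfect graph, so by induction its vertex set decomposes into $\alpha(G)-1$ cliques, and appending $K$ yields the required partition.

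To establish the claim, suppose for contradiction that for every maximal clique $K_i$ ($i=1,\dots,m$) of $G$ there is a maximum independent set $S_i$ disjoint from $K_i$. Define $h(v):=|\{i:v\in S_i\}|$ and let $G^*$ be obtained from $G$ by replacing each $v$ with a clique on $h(v)$ vertices (deleting $v$ if $h(v)=0$); iterating the Replication Lemma makes $G^*$ perfect. Then $|V(G^*)|=\sum_v h(v)=\sum_i|S_i|=m\,\alpha(G)$, and any maximum independent set of $G$ lifts to an independent set in $G^*$, giving $\alpha(G^*)=\alpha(G)$. On the other hand, any clique of $G^*$ is contained in the blow-up of some maximal $K_j$, and $\sum_{v\in K_j}h(v)=\sum_i|K_j\cap S_i|\le m-1$, because $K_j\cap S_j=\emptyset$ and $|K_j\cap S_i|\le 1$ for $i\ne j$. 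Therefore $\omega(G^*)\le m-1<m\le |V(G^*)|/\alpha(G^*)\le\chi(G^*)$, contradicting perfection of $G^*$. The main obstacle is the Replication Lemma: its nontrivial case ($\omega(G')=\omega(G)$) is the only step requiring a genuine construction, and the swap of $B$ for $B\cup\{v'\}$ in the color class is the essential trick.
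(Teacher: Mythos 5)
The paper does not prove this statement: it is quoted as Lov\'asz's classical weak perfect graph theorem with a citation, and no argument is given, so there is nothing in the text to compare against. Your proposal is the standard Lov\'asz proof via the Replication Lemma followed by the vertex-multiplication counting argument, and it is essentially correct: the two-case proof of the lemma (new color when $v$ lies in a maximum clique; otherwise swap the class $B=A\setminus\{v\}$ for $B\cup\{v'\}$ on top of an $(\omega(G)-1)$-coloring of $G\setminus B$) is the right construction, and the final contradiction $\omega(G^*)\le m-1<m\le\chi(G^*)$ is the classical one. One small point to tighten: what the last chain of inequalities actually needs is $\alpha(G^*)\le\alpha(G)$, which follows because an independent set of $G^*$ meets each blown-up clique in at most one vertex and projects to an independent set of $G$; you instead argue the reverse direction by ``lifting'' a maximum independent set of $G$, which can fail when some of its vertices have $h(v)=0$ and are deleted. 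Fortunately that direction is not used, so the argument stands once the inequality you rely on is stated explicitly.
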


Berge also conjectured a  stronger statement: \emph{a graph is perfect
if and only if it does not  contain as an induced subgraph an odd hole
or an  odd antihole}  (the Strong Perfect  Graph Conjecture),  where a
\emph{hole} is  a chordless cycle with  at least four  vertices and an
\emph{antihole} is the complement of  a hole.  We follow the tradition
of calling \emph{Berge graph} any  graph that contains no odd hole and
no odd antihole.  The Strong Perfect Graph Conjecture was the objet of
much research (see  the book~\cite{livre:perfectgraphs}), until it was
finally    proved    by    Chudnovsky,    Robertson,    Seymour    and
Thomas:

\begin{thm}[Chudnovsky,    Robertson,    Seymour    and
    Thomas~\cite{chudvovsky.r.s.t:spgt}]
  \label{th:fort}
  Every Berge graph is perfect.
\end{thm}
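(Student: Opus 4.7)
The plan is to proceed by contradiction through a structural decomposition theorem, following the strategy of Chudnovsky--Robertson--Seymour--Thomas. Suppose some Berge graph fails to be perfect; then there is a \emph{minimal imperfect Berge graph} $G$, i.e.\ $G$ is Berge, $\chi(G)>\omega(G)$, and every proper induced subgraph $G'$ satisfies $\chi(G')=\omega(G')$. The goal is to show $G$ cannot exist.

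First I would assemble the classical ``no-fault'' lemmas for minimal imperfect graphs: a minimal imperfect graph has no clique cutset, no star cutset (Chv\'atal), no balanced skew partition, no 2-join, no homogeneous pair, and (by Meyniel) no even pair. I would also record that, by Theorem~\ref{th:lovasz}, the complement $\overline{G}$ is also a minimal imperfect Berge graph, so each fault can equally well be forbidden in $\overline{G}$. This gives a list of structural faults, none of which $G$ can admit.

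The heart of the proof is the decomposition theorem: every Berge graph $G$ either (i) lies in one of a handful of \emph{basic} classes --- bipartite graphs, line graphs of bipartite graphs, double split graphs, or their complements --- or (ii) admits a 2-join, a complement 2-join, a balanced skew partition, or a homogeneous pair. To prove this, I would organize the analysis around distinguished induced subgraphs: long prisms (three disjoint paths joining two triangles), \emph{wheels} (a hole together with a vertex with many neighbours on it), and configurations forbidden in line graphs. In the presence of a suitable even prism, one traces out attachments to force a 2-join or skew partition; in its absence, wheels and ``trigons'' are used to extract a homogeneous pair or balanced skew partition; in the absence of all such substructures, a careful local analysis forces $G$ (or $\overline{G}$) into a basic class. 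Each basic class is known to be perfect by classical results (K\"onig's theorem handles bipartite graphs and line graphs of bipartite graphs; double split graphs are a direct check; complements are handled via Theorem~\ref{th:lovasz}).

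Combining the two ingredients yields the contradiction: our minimal imperfect Berge graph $G$ cannot belong to a basic class (being imperfect), yet by the decomposition theorem it must admit one of the listed faults, contradicting the no-fault lemmas. The main obstacle, by far, is the decomposition theorem itself: its proof requires a long case analysis stratified by the kinds of prisms, wheels, and odd configurations present in $G$, and each subcase demands delicate tracking of neighbourhoods to locate an explicit 2-join, skew partition, or homogeneous pair. Everything else (the basic cases and the no-fault lemmas) is comparatively routine.
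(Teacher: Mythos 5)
This statement is not proved in the paper at all: it is quoted as an external result of Chudnovsky, Robertson, Seymour and Thomas, so there is no ``paper's own proof'' to compare against. Your proposal is an accurate summary of the strategy of the actual published proof (minimal imperfect counterexample, no-fault lemmas, decomposition into basic classes or structural faults), but it is an outline, not a proof. The decisive gap is that the decomposition theorem --- every Berge graph is basic or admits a 2-join, a 2-join in the complement, a balanced skew partition, or a homogeneous pair --- is simply asserted. That theorem \emph{is} the Strong Perfect Graph Theorem for all practical purposes: its proof occupies well over a hundred pages of case analysis, and saying that one ``traces out attachments to force a 2-join or skew partition'' does not constitute an argument. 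Nothing in your write-up could be checked or filled in by a reader without reconstructing the entire CRST paper.

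A secondary inaccuracy: you list ``no balanced skew partition'' among the classical no-fault lemmas and call the no-fault side ``comparatively routine.'' It is not. That no minimal imperfect Berge graph admits a balanced skew partition was itself a major contribution of Chudnovsky--Robertson--Seymour--Thomas (Chv\'atal's skew partition conjecture was open at the time), and its proof via recolouring arguments is a substantial piece of their paper, not a consequence of the star-cutset lemma. The 2-join and homogeneous pair exclusions are genuinely classical (Cornu\'ejols--Cunningham and Chv\'atal--Sbihi respectively), but lumping the skew partition case with them misrepresents where the difficulty lies. As it stands, your proposal is a correct map of the territory but contains no terrain.
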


In       fact      Chudnovsky,       Robertson,       Seymour      and
Thomas~\cite{chudvovsky.r.s.t:spgt}    proved    a   stronger    fact,
conjectured        by         Conforti,        Cornu\'ejols        and
Vu\'skovi\'c~\cite{conforti.c.v:square}:  every   Berge  graph  either
falls  in  a \emph{basic  class}  or  has  a \emph{structural  fault}.
Before  stating this  more precisely,  let us  say that  a \emph{basic
class} of graphs is a class of graphs that are proved to be perfect by
some  classical coloring  argument.   A \emph{structural  fault} in  a
graph is something  that cannot occur in a  minimal counter-example to
the perfect graph conjecture.  The basic classes used by Chudnovsky et
al.  are  the bipartite graphs,  their complement, the  line-graphs of
bipartite graphs,  their complement, and the  double split-graphs. The
structural  faults used by  Chudnovsky et  al.  are  the \emph{2-join}
(first          defined          by          Cornu\'ejols          and
Cunningham~\cite{cornuejols.cunningham:2join}),  the  \emph{even  skew
partition}     (a    refinement     of        Chv\'atal's     skew
partition~\cite{chvatal:starcutset})  and the  \emph{homogeneous pair}
(first defined  by Chv\'atal and Sbihi~\cite{chvatal.sbihi:bullfree}).
We do not give  here the precise definitions as far as  we do not need
them.

Despite  those  breakthroughs,  some  conjectures about  Berge  graphs
remain  open.   An \emph{even  pair}  in  a graph  $G$  is  a pair  of
non-adjacent vertices such that  every chordless path between them has
even length  (number of edges).  Given  two vertices $x,y$  in a graph
$G$, the  operation of \emph{contracting} them means  removing $x$ and
$y$ and  adding one vertex with  edges to every  vertex of $G\setminus
\{x,y\}$ that is  adjacent in $G$ to at least one  of $x,y$; we denote
by $G/xy$  the graph  that results from  this operation.   Fonlupt and
Uhry proved the following:

\begin{thm}[Fonlupt and Uhry~\cite{fonlupt.uhry:82}]
  \label{th:fonuhr}
  If $G$ is a perfect graph and $\{x,y\}$ is an even pair in $G$, then
  the  graph $G/xy$  is  perfect  and has  the  same chromatic  number
  as~$G$.
\end{thm}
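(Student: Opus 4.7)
The plan is to establish the chain
\[ \omega(G) \le \omega(G/xy) \le \chi(G/xy) \le \chi(G) = \omega(G),\]
which then collapses to equalities. The inequality $\omega(G)\le\omega(G/xy)$ is immediate (lift any clique of $G$ containing $x$ or $y$ by replacing that vertex with the contracted vertex $z$, which inherits all former neighbors of $x$ and $y$), and the trailing equality is perfection of $G$. Hence everything reduces to two upper bounds, on $\omega(G/xy)$ and on $\chi(G/xy)$, and both will exploit the even-pair hypothesis. Once these are in hand, perfection of $G/xy$ will follow by applying the same argument to every induced subgraph.

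For the clique bound, let $K$ be a clique of $G/xy$ containing $z$ and write $K=\{z\}\cup K'$; each vertex of $K'$ is then adjacent in $G$ to at least one of $x,y$. If some $a\in K'$ were adjacent to $x$ but not $y$ and some $b\in K'$ adjacent to $y$ but not $x$, then $x$-$a$-$b$-$y$ would be an induced path of length~$3$, contradicting that $\{x,y\}$ is an even pair. Hence either $K'\cup\{x\}$ or $K'\cup\{y\}$ is a clique of $G$ of size $|K|$, and so $\omega(G/xy)\le\omega(G)$.

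The chromatic bound is the heart of the argument. Fix an $\omega(G)$-coloring of $G$ (existing by perfection) and let $C_x,C_y$ be the color classes of $x,y$; it suffices to recolor so that $x,y$ share a class, since such a coloring descends to $G/xy$. If $C_x=C_y$ we are done; otherwise let $D$ be the connected component of $x$ in the bipartite graph $G[C_x\cup C_y]$. Were $y\in D$, a shortest $xy$-path $P$ in $D$ would alternate between $C_x$ and $C_y$, hence have odd length. Any chord of $P$ in $G$ must lie between $C_x$ and $C_y$ (both are stable sets) and therefore be an edge of $G[C_x\cup C_y]$, contradicting the minimality of $P$; so $P$ is induced in $G$. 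An induced odd $xy$-path in $G$ contradicts the even-pair hypothesis, so $y\notin D$, and swapping the two colors on $D$ yields the desired recoloring. I expect this color-swap step — in particular the realization that a shortest path in $G[C_x\cup C_y]$ is automatically chordless in $G$ — to be the main technical point.

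Perfection of $G/xy$ is then routine: any induced subgraph $H'$ of $G/xy$ either avoids $z$, in which case $H'$ is an induced subgraph of $G$ and is perfect, or equals $H/xy$ for a unique induced subgraph $H\subseteq G$ containing $x$ and $y$. In the latter case $\{x,y\}$ is still an even pair of the perfect graph $H$, since a chordless $xy$-path in $H$ is still chordless in $G$, so the two bounds above apply to $H$ and give $\chi(H')=\omega(H')$. Specializing $H'=G/xy$ also recovers $\chi(G/xy)=\chi(G)$.
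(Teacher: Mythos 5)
Your proof is correct and follows exactly the approach the paper attributes to Fonlupt and Uhry: the paper only cites this result, but explicitly describes its key step as a bichromatic exchange whose only obstruction would be an odd induced path between $x$ and $y$, which is precisely your Kempe-chain argument (and is mirrored in the paper's own proof of Theorem~\ref{pair.l.preserve}). The clique bound and the reduction to induced subgraphs are the standard complements of that step, so nothing further is needed.
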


\noindent Meyniel also proved the following:

\begin{thm}[Meyniel,~\cite{meyniel:87}]
  \label{th:meyniel}
  Let $G$ be a minimal imperfect graph. Then $G$ has no even pair.
\end{thm}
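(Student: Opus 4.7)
The plan is to argue by contradiction. Assume that $G$ is a minimal imperfect graph containing an even pair $\{x,y\}$, and set $\omega=\omega(G)$, so that $\chi(G)=\omega+1$ while every proper induced subgraph of $G$ is $\omega$-colorable. The strategy is to manufacture an $\omega$-coloring of $G$ itself, which gives the desired contradiction.

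The first step uses that $G-\{x,y\}$ is perfect and hence admits an $\omega$-coloring $c$ with color classes $S_1,\ldots,S_\omega$. Since $x$ and $y$ are non-adjacent, $c$ extends to an $\omega$-coloring of $G$ as soon as there are colors $i,j$ (possibly equal) with $N(x)\cap S_i=\emptyset$ and $N(y)\cap S_j=\emptyset$. Hence we may assume that for every pair $(i,j)$ at least one of these intersections is non-empty; in particular, taking $i=j$, for every color $i$ at least one of $\{x,y\}$ has a neighbor in $S_i$. A symmetric application to $\omega$-colorings of $G-x$ and of $G-y$ allows us to further assume that both $x$ and $y$ have neighbors in every color class of $c$.

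The heart of the proof is a bichromatic swap along a chordless $x$-$y$ path. Pick colors $i\neq j$ with $x_1\in N(x)\cap S_i$ and $y_1\in N(y)\cap S_j$, and let $P$ be a chordless path from $x_1$ to $y_1$ inside the bichromatic subgraph $G[S_i\cup S_j]$; adding $x$ and $y$ at its ends produces a chordless $x$-$y$ path in $G$, which by the even-pair hypothesis must have even length. The parity of the alternation of colors along $P$ is thereby forced, and swapping colors $i$ and $j$ on the connected component of $G[S_i\cup S_j]$ containing $P$ yields a new $\omega$-coloring of $G-\{x,y\}$ whose intersection pattern with $N(x)$ and $N(y)$ has been strictly altered. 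Iterating such swaps along well-chosen paths, and using the parity constraint each time, one eventually obtains a coloring in which some class avoids $N(x)$ and some (possibly identical) class avoids $N(y)$, contradicting the conclusion of the previous paragraph.

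The main obstacle is verifying that each bichromatic swap is valid, that is, that interchanging colors $i$ and $j$ on one connected component of $G[S_i\cup S_j]$ does not create monochromatic edges elsewhere, and that the iteration terminates rather than cycling through colorings. Both points reduce to the same parity observation: any obstruction to the swap corresponds to a chordless $x$-$y$ path whose length parity violates the even-pair hypothesis. Setting up the bookkeeping so as to make this reduction rigorous—choosing the paths $P$, controlling which connected component of $G[S_i\cup S_j]$ is swapped, and measuring progress of the iteration—is the delicate technical heart of the argument, and it is precisely where the even-pair assumption is used in an essential way.
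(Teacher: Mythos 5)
The paper does not prove this statement at all---it is quoted from Meyniel \cite{meyniel:87}---so I can only measure your attempt against the known proof. That proof contracts the even pair: one shows $\omega(G/xy)=\omega(G)$ (a larger clique through the contracted vertex would yield a chordless odd path $x\!-\!u\!-\!v\!-\!y$), uses the Fonlupt--Uhry theorem to see that every \emph{proper} induced subgraph of $G/xy$ is perfect, and then concludes: if $G/xy$ is perfect its $\omega$-coloring lifts to an $\omega$-coloring of $G$, a contradiction; if $G/xy$ is minimal imperfect, Lov\'asz's equality $|V(H)|=\alpha(H)\omega(H)+1$ applied to both $G$ and $G/xy$ gives $\alpha(G)\omega = \alpha(G/xy)\omega+1$, impossible since $\omega\ge 2$. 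Your proposal contains neither the contraction nor the counting ingredient, and I do not believe the purely coloring-theoretic route you sketch can be completed: nothing guarantees that the ``good'' case (a coloring of $G\setminus\{x,y\}$ that extends) is always reachable by Kempe swaps; in the standard proof the other terminal case is killed only by the arithmetic $n=\alpha\omega+1$.

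Two concrete gaps. First, the chordlessness claim is false as stated: a chordless path $P$ from $x_1\in N(x)\cap S_i$ to $y_1\in N(y)\cap S_j$ inside $G[S_i\cup S_j]$ does not become a chordless $x$--$y$ path when you append $x$ and $y$, because $x$ and $y$ may have many other neighbours on $P$ (and $x_1,y_1$ need not even lie in the same component of $G[S_i\cup S_j]$); the correct device, as in the paper's own proof of Theorem~\ref{pair.l.preserve}, is a shortest path inside one bichromatic component between carefully chosen vertices, whose parity is then read off from the alternation of colors. Second, and decisively, the passage ``iterating such swaps \dots one eventually obtains a coloring in which some class avoids $N(x)$'' is not a proof step but a restatement of the theorem: each individual swap is always a valid recoloring (that is the standard Kempe-chain property, needing no parity), so the even-pair hypothesis must be used to show the iteration makes \emph{progress}, and you supply no progress measure, no choice rule for the paths, and no termination argument. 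You acknowledge this yourself, but it is the entire content of the theorem, not bookkeeping. Note also that your preliminary reduction (``both $x$ and $y$ have neighbours in every class of $c$'') does not follow from the stated symmetric application to $G-x$ and $G-y$: restricting the unique coloring of $G-x$ to $G\setminus\{x,y\}$ always leaves the class that contained $y$ disjoint from $N(y)$.
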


So even pairs can be  consider as a ``structural fault'', with respect
to a proof  of perfectness for some classes  of graphs.  This approach
for     proving     perfectness     has     been     formalised     by
Meyniel~\cite{meyniel:87}:  a \emph{strict  quasi-parity}  graph is  a
graph such  that every induced subgraph  either is a clique  or has an
even  pair.  By  Theorem~\ref{th:meyniel},  every strict  quasi-parity
graph is perfect.  Many classical  families of perfect graphs, such as
Meyniel  graphs, weakly  chordal graphs,  perfectly  orderable graphs,
Artemis         graphs,        are         strict        quasi-parity,
see~\cite{everett.f.l.m.p.r:ep,nf:artemis}.    A   \emph{quasi-parity}
graph is a  graph $G$ such that for every induced  subgraph $G'$ on at
least two vertices,  either $G'$ has an even  pair, or $\overline{G'}$
has an  even pair.  By  Theorems~\ref{th:meyniel} and~\ref{th:lovasz},
we  know that  quasi-parity graphs  are perfect.   Quasi-parity graphs
graphs  include  every strict  quasi  parity  graphs,  and also  other
classes          of          graphs:          bull-free          Berge
graphs~\cite{figuereido.m.p:bullfree},       bull-reducible      Berge
graphs~\cite{figuereido.m.v:bullred}.

\begin{figure}
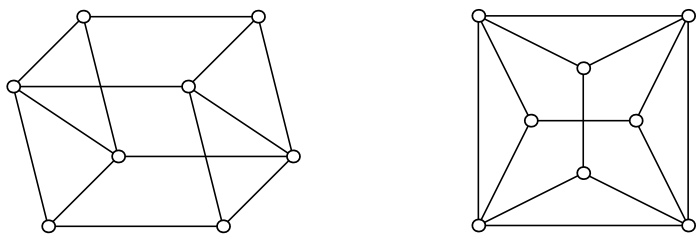

  \includegraphics[height=3cm]{oddpair.1}
  \rule{2cm}{0cm}
  \includegraphics[height=3cm]{oddpair.3}
  \caption{The double-diamond and $L(K_{3,3}\setminus e)$\label{fig:ddlk33e}}
\end{figure}

There are  interesting open  problems about quasi-parity  graphs.  Say
that a graph  is a \emph{prism} if it  consists of two vertex-disjoint
triangles  (cliques  of size  $3$)  with  three vertex-disjoint  paths
between them, and with no other  edges than those in the two triangles
and   in   the   three   paths.   (Prisms   were   called   stretchers
in~\cite{everett.f.l.m.p.r:ep}     and     3PC($\Delta,     \Delta$)'s
in~\cite{conforti.c.v:square}). A prism  is said to be long  if it has
at least  7~vertices.  The double-diamond  and $L(K_{3,3}\setminus e)$
are the graphs depicted figure~\ref{fig:ddlk33e}.  Let us now recall a
definition:                 a                 graph                 is
\emph{bipartisan}~\cite{chudvovsky.r.s.t:progress}   if  in   $G$  and
$\overline{G}$ there is no odd  hole, no long prism, no double-diamond
and  no $L(K_{3,3}\setminus  e)$.   The last  50~pages  of the  strong
perfect  theorem paper~\cite{chudvovsky.r.s.t:spgt}  are devoted  to a
proof  of  perfectness for  bipartisan  graphs.   This  part could  be
replaced by a proof of the following conjecture:

\begin{conjecture}[Maffray, Thomas]
  \label{conj.bipar}
  Every bipartisan graph is a quasi-parity graph.
\end{conjecture}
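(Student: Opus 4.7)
The plan is to prove the conjecture by induction on $|V(G)|$, leveraging the decomposition theorem of Chudnovsky, Robertson, Seymour and Thomas that underlies Theorem~\ref{th:fort}. Since a bipartisan graph is in particular Berge, it either lies in one of the basic classes (bipartite, complement of bipartite, line graph of bipartite, complement of line graph of bipartite, or double split graph) or admits one of the structural faults (a 2-join, an even skew partition, or a homogeneous pair in $G$ or in $\overline{G}$). The forbidden configurations in the bipartisan definition -- long prisms, double-diamonds, $L(K_{3,3}\setminus e)$ -- are exactly the obstructions that are known to prevent even pairs in otherwise well-behaved classes, so this is the right hypothesis to drive the induction.

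For the base cases I would directly exhibit even pairs in each basic class that is compatible with being bipartisan. In a bipartite graph, any two non-adjacent vertices in the same color class form an even pair, since every induced path between them has even length; the complement of a bipartite graph is handled by symmetry. In the line graph $L(H)$ of a bipartite graph $H$, two edges of $H$ that are non-incident and not connected by a single third edge of $H$ form an even pair in $L(H)$; when no such edges exist, $H$ is very restricted and one can argue via $\overline{L(H)}$. Double split graphs that are bipartisan form a very narrow subclass (the double-diamond being explicitly excluded), so a finite check suffices.

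The induction step is where the real work lies. If $G$ has a 2-join $(X_1,X_2)$, one forms the two blocks $G_1,G_2$ by attaching marker vertices to each side. I would first verify that each block inherits bipartisanship (using that the forbidden subgraphs in bipartisan are all connected and 2-connected enough that a 2-join cannot create new ones), apply the induction hypothesis to the strictly smaller blocks, and then argue that an even pair in a block lifts to an even pair in $G$ or $\overline{G}$ -- the bipartisan hypothesis being exactly what ensures that induced paths crossing the 2-join have controlled parity. The analogous arguments for homogeneous pairs are similar in spirit, since they too offer a clean way to pass to smaller graphs.

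The main obstacle is the even skew partition case. Unlike a 2-join, a skew partition does not split $G$ into smaller induced subgraphs, so there is no direct inductive block to work with, and an induced path between two candidate vertices may wander arbitrarily between the two sides. Here I would argue contrapositively: assuming neither $G$ nor $\overline{G}$ has an even pair, every pair of non-adjacent vertices on one side of the skew partition admits both an even and an odd induced path, and I would combine these with the connectivity of the other side to trace out either an odd hole, a long prism, a double-diamond, or an $L(K_{3,3}\setminus e)$, contradicting the bipartisan hypothesis. Making this trace-out step precise -- identifying a finite catalogue of forced configurations from ``two paths of opposite parities plus a skew cut'' -- is the technical crux on which the whole argument stands.
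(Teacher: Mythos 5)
This statement is labelled a \emph{conjecture} in the paper for a reason: the paper contains no proof of it, and indeed the entire paper is motivated by the observation that the most natural attack on it fails. Your plan founders already at the base cases. You propose to ``directly exhibit even pairs in each basic class,'' but for line-graphs of bipartite graphs this is precisely what cannot be done: by Hougardy's theorem quoted in the introduction, the line-graph of any $3$-connected graph has no even pair in $G$ or in $\overline{G}$, and the authors' whole point is that line-graphs of bipartite graphs are ``likely to be without even pairs'' --- that is why they introduce odd pairs of cliques as a substitute. (Of course a bipartisan line-graph of a bipartite graph is a restricted object, but you would then have to prove that bipartisanship forces an even pair in $G$ or $\overline{G}$ there, which is itself a nontrivial open-ended task, not a ``direct exhibition.'') Your concrete criterion is also false as stated: two opposite edges of $C_6$ are non-incident and not joined by a third edge, yet in $L(C_6)\cong C_6$ the two induced paths between them both have length $3$, so they are not an even pair.

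The inductive step has the same character. That an even pair in a block of a $2$-join ``lifts'' to an even pair of $G$ or $\overline{G}$ is not automatic --- the lifted pair may acquire odd induced paths routed through the other side of the $2$-join, and controlling those parities is real work that you wave at rather than do. Most importantly, you yourself identify the even-skew-partition case as ``the technical crux on which the whole argument stands'' and leave it entirely unresolved; the claim that two non-adjacent vertices with induced paths of both parities, together with a skew cutset, must force one of the four bipartisan obstructions has no argument behind it and no finite catalogue is produced. A proof sketch whose central case is an acknowledged open problem is not a proof; what you have written is a (reasonable) research programme, essentially the one the community already knows, and the conjecture remains open both in the paper and in your proposal.
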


Why  not conjecture that  \emph{every} Berge  graph is  a quasi-parity
graph~?  Simply because this is false. Some counter-examples (like the
smallest  one: $L(K_{3,3}\setminus  e)$),  were known  since the  very
beginning  of the  study of  even-pairs.  Hougardy  found  an infinite
class of counter-examples:

\begin{thm}[Hougardy, \cite{hougardy:95}]
  Let  $G$ be  the line-graph  of a  3-connected graph.  Then  $G$ and
  $\overline{G}$ have no even pair.
\end{thm}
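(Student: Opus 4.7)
I would treat the two assertions separately. For the $L(H)$ part, I first translate the line-graph operation: an induced path $e_0 e_1 \cdots e_k$ of $L(H)$ is exactly the edge-sequence of a simple path $v_0 v_1 \cdots v_{k+1}$ of $H$, with $e_i = v_iv_{i+1}$ for each $i$. Indeed, consecutive $e_i$'s share a vertex in $H$ (adjacency in $L(H)$) while non-consecutive ones are disjoint (chordlessness), and a short inductive argument using the second condition forces the vertex sequence to be simple; conversely any simple path of length $\ell \ge 2$ in $H$ yields an induced path of length $\ell - 1$ in $L(H)$. So for disjoint edges $e = uv$ and $f = xy$ of $H$, showing that $\{e, f\}$ is not an even pair of $L(H)$ amounts to producing a path of $H$ of even length whose first edge is $e$ and whose last edge is $f$.

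To produce such a path I would use Menger's theorem. Candidate $H$-paths split into four types according to which endpoint of $e$ is first and which endpoint of $f$ is last: each type is a path in $H \setminus \{a, b\}$ between two specified vertices, for some choice of $\{a, b\} \subseteq \{u, v, x, y\}$ with $a \in e$ and $b \in f$. By 3-connectivity of $H$, each such $H \setminus \{a, b\}$ is connected, so each type supplies at least one path. If one of these has even length we are done; otherwise all four graphs $H \setminus \{a, b\}$ must be bipartite with their specified endpoints in opposite sides, a severe rigidity that I would contradict by extracting an odd closed walk of $H$ from three internally disjoint $u$-$x$ paths (Menger again) and using it to exhibit a path of wrong parity in one of the $H \setminus \{a, b\}$.

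For the complement, two non-adjacent vertices of $\overline{L(H)}$ are incident edges of $H$, sharing some vertex $w$, and an induced path $e_0 e_1 \cdots e_k$ of $\overline{L(H)}$ is a sequence of edges of $H$ with $e_i \cap e_{i+1} = \emptyset$ but $e_i \cap e_j \neq \emptyset$ whenever $|i - j| \geq 2$. An elementary observation -- any four pairwise incident edges of a graph share a common endpoint -- forces, once $k \geq 6$, all even-indexed edges to meet at a common vertex $v$ and all odd-indexed ones at a common vertex $u \neq v$; so long induced paths of $\overline{L(H)}$ have a rigid ``two-star'' shape alternating between stars at $u$ and $v$. Using this together with 3-connectivity of $H$ (which guarantees many disjoint edges and high degree around $w$), I would build induced paths of both parities between $e$ and $f$: first an ad hoc construction of short induced paths (lengths $2$ and $3$) from small gadgets supplied by Menger, then a parity-adjusting extension in the two-star regime. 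The main obstacle is this last step, where one must simultaneously enforce the disjointness of consecutive edges and the incidence of non-consecutive ones while changing parity by exactly one; I expect this to require a careful choice of ``leaves'' at each alternation, justified by 3-connectivity.
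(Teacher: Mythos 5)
First, a point of reference: the paper does not prove this statement at all --- it is quoted from Hougardy's article \cite{hougardy:95} as a known result --- so there is no in-paper proof to compare yours against, and I can only judge the proposal on its own terms. Your translation of induced paths of $L(H)$ into simple paths of $H$ with prescribed first and last edges is correct, and the Menger set-up is sensible, but the decisive step of the first half (``otherwise \dots\ a severe rigidity that I would contradict by extracting an odd closed walk of $H$'') cannot be carried out, for two independent reasons. (i) The statement as literally written fails for $H=K_4$: in $L(K_4)=K_{2,2,2}$ every common neighbour of two antipodal vertices (two disjoint edges of $K_4$) is adjacent to both of them, so every induced path between them has length~$2$ and they form an even pair. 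Hougardy's theorem really concerns line-graphs of \emph{bipartite} $3$-connected graphs (cf.\ the title of \cite{hougardy:95}), or must exclude $K_4$; an argument that, like yours, would apply verbatim to $K_4$ must therefore break somewhere. (ii) In the bipartite case, which is the real content, $H$ has \emph{no} odd closed walk to extract, so your contradiction is unavailable precisely where the theorem holds. What saves that case is a pure parity count you did not make: writing $e=uv$, $f=xy$ with $u,x$ on the same side of the bipartition, any $v$--$y$ path of $H\setminus\{u,x\}$ (nonempty by $3$-connectivity) extends to a path $u,v,\dots,y,x$ of even length, giving the odd induced path of $L(H)$ you need.

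The second half is in worse shape. The structural facts you invoke are correct (four pairwise intersecting distinct edges share an endpoint; long induced paths of $\overline{L(H)}$ alternate between two stars), but the entire burden of the proof --- exhibiting an induced path of odd length in $\overline{L(H)}$ between two edges $wa$, $wb$ of $H$ through a common vertex, while enforcing disjointness of consecutive edges and intersection of all non-consecutive ones --- is deferred to ``an ad hoc construction'' and ``a careful choice of leaves'' that you explicitly flag as the main obstacle and never supply. This is not a routine detail: for $H=K_4$ one has $\overline{L(K_4)}=3K_2$, which is disconnected, so the required paths do not exist at all, and the construction must exploit bipartiteness (or the size) of $H$ in a way the sketch never identifies. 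As it stands the proposal is a plan with a sound opening translation and two unfilled gaps at exactly the points where the theorem's content lies.
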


The aim  of this  paper is to  investigate the following  question: is
there an  ``even-pair-like'' notion that  could be a  structural fault
present in every Berge graph~?   We know that line-graphs of bipartite
graphs are likely  to be without even pairs.   So, they certainly form
one of the first class where  we have to find something.  On the other
hand, a  bipartite graph $B$  with at least  3 vertices always  has an
even  pair:  consider two  vertices  $a,b$ in  the  same  side of  the
bipartition.  What happens to this even pair $\{a,b\}$ in $L(B)$~? All
the edges incident to $a$ form  a clique $K_a$ of $L(B)$, and there is
a similar  clique $K_b$.  Moreover,  every induced path from  $K_a$ to
$K_b$  with no  interior  vertices  in $V(K_a)  \cup  V(K_b)$ has  odd
length.   This leads  us to  the following  definition: let  $K_1$ and
$K_2$ be two cliques of a graph  $G$.  We say that an induced path $P$
is \emph{external  from $K_1$ to $K_2$}  if $P$ has  one end-vertex in
$K_a$, one end-vertex in $K_b$, and all the other possible vertices in
$V(G) \setminus (V(K_1) \cup V(K_2))$.   The pair $\{K_1, K_2\}$ is an
\emph{odd  pair of  cliques} if  every external  induced  path between
$K_1$ and $K_2$ has odd length.  Note that if $\{K_1, K_2\}$ is an odd
pair of  cliques, then $K_1$ and  $K_2$ are disjoint  since a possible
common vertex  would be an  external path of length~0.   Similarly, we
say that $\{K_1,  K_2\}$ is an \emph{even pair  of cliques} when every
external induced  path between  $K_1$ and $K_2$  has even  length.  We
propose the following two conjectures:

\begin{conjecture}
  \label{conj.struct}
  Let $G$ be a Berge graph on at least two vertices. Then either:
  \begin{itemize}
  \item
    $G$ or $\overline{G}$ has an even pair.
  \item
    $G$ or  $\overline{G}$ has an  odd pair of cliques  $\{K_1, K_2\}$
    such that $K_1, K_2$ are maximal cliques of $G$.
  \end{itemize}
\end{conjecture}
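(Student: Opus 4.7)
The natural plan is to invoke the strong structure theorem underlying Theorem~\ref{th:fort} and argue by induction on $|V(G)|$: in each configuration we try either to exhibit a witness (an even pair in $G$ or $\overline{G}$, or an odd pair of maximal cliques in $G$ or $\overline{G}$) directly, or use the decomposition to reduce to a smaller Berge graph and pull a witness given by induction back to $G$. So the proof splits along the five basic classes and the three structural faults of the strong theorem.

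The basic classes should be favourable. If $G$ is bipartite with at least three vertices then two vertices on the same side of the bipartition are non-adjacent and every chordless path between them has even length, so they form an even pair; very small cases, complete bipartite graphs and complements of bipartite graphs are handled by inspection and duality. If $G$ is the line-graph of a bipartite graph $B$ and $a,b$ are two vertices on the same side of the bipartition of $B$, then the edges incident to $a$ and to $b$ form cliques $K_a,K_b$ of $G$, and the argument sketched in the introduction shows that $\{K_a,K_b\}$ is an odd pair of cliques; a routine check on the degrees of $a$ and $b$ in $B$ allows us to choose them so that $K_a$ and $K_b$ are maximal. Complements of line-graphs of bipartite graphs follow by duality, and double split graphs by direct inspection of the construction.

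For the structural faults, a 2-join should be tractable: the two sides of a 2-join come equipped with a distinguished pair of cliques, and by replacing one side with a short gadget one obtains a smaller Berge graph in which induction produces a witness that lifts to $G$ or $\overline{G}$. A homogeneous pair $(A,B)$ is analogous: contracting $A$ and $B$ to a couple of vertices yields a smaller Berge graph, and one must verify that any even pair or odd pair of maximal cliques obtained by induction pulls back to an analogous object in $G$ or $\overline{G}$, modifying the cliques to absorb $A\cup B$ where necessary.

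The main obstacle will almost certainly be the even skew partition. Skew partitions $(A,B)$ are non-local: $A$ disconnects $G$ and $B$ disconnects $\overline{G}$, but the pieces produced are not themselves Berge subgraphs in any useful inductive sense, so a clean reduction fails. A plausible route is to exploit the parity hypothesis built into an \emph{even} skew partition to show that either two vertices inside $A$ or inside $B$ form an even pair in $G$ or $\overline{G}$, or else that two anticomplete components on either side of $A$ extend to maximal cliques whose external paths are forced, by evenness of the partition together with Bergeness, to have odd length, yielding the required odd pair of maximal cliques. Turning this heuristic into a proof will require a detailed case analysis on the adjacencies within $A$ and $B$ and on the parities of external paths crossing the partition, and this case analysis is where I expect the bulk of the difficulty of the full conjecture to lie.
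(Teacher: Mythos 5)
The statement you are asked to prove is stated in the paper as a \emph{conjecture}, and the paper does not prove it: it only verifies it for the basic classes (bipartite graphs, line-graphs of bipartite graphs, their complements, and double split graphs) and leaves the general case open. Your proposal is likewise not a proof but a plan, and you concede as much: the even skew partition case is left entirely unresolved (``this case analysis is where I expect the bulk of the difficulty \dots to lie''). That acknowledged hole is by itself fatal, since skew partitions are the dominant outcome of the decomposition theorem. The basic-class part of your plan is essentially what the paper does (and Theorem~\ref{pair.l.caracennemie} gives the line-graph case in a stronger form), so that portion is fine, but it is the easy portion.

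Beyond the admitted gap, the reduction steps you assert for the 2-join and the homogeneous pair are not justified and are themselves problematic: it is not clear that an even pair or an odd pair of \emph{maximal} cliques in a block of decomposition lifts back to one in $G$ or $\overline{G}$, nor that ``absorbing $A\cup B$'' into a clique preserves either maximality or the parity condition on external paths. The paper's own closing lemma is direct evidence against this kind of lifting: if an odd pair of cliques $\{K_1,K_2\}$ with $K_2$ maximal could be ``grown'' by shrinking $K_1$ to a proper subclique while remaining an odd pair, then $G$ would have a star cutset --- and non-basic Berge graphs need not have one. So the inductive pull-back you rely on fails in exactly the situations where it would be needed, and the conjecture remains open.
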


\begin{conjecture}
  \label{conj.mini}
  Let $G$ be  a mimilal imperfect graph.  Then $G$ has  no odd pair of
  cliques  $\{K_1, K_2\}$, such  that $K_1,  K_2$ are  maximal cliques
  of~$G$.
\end{conjecture}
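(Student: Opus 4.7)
The strategy is by contradiction: assume $G$ is a minimal imperfect graph with an odd pair of maximal cliques $\{K_1, K_2\}$, and derive a contradiction with Meyniel's Theorem~\ref{th:meyniel} by exhibiting an even pair (or else contradicting the rigid arithmetic of minimal imperfect graphs directly).

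First I would exploit the maximality of $K_1$ together with the disjointness forced by the odd-pair definition: if every $u \in K_1$ were adjacent to every $v \in K_2$, then $K_1 \cup K_2$ would be a clique strictly larger than $K_1$, contradicting maximality. Hence some $u \in K_1$ and $v \in K_2$ are non-adjacent. The natural attempt is to show that this specific pair $\{u,v\}$ is an even pair. For a chordless $u$-$v$ path $P$, I would split on how $P$ meets $K_1 \cup K_2$ in its interior: in the easy case $P$ has no interior vertex in $K_1 \cup K_2$, so $P$ is external from $K_1$ to $K_2$ and has odd length by hypothesis. This gives the wrong parity, so the naive attack fails immediately and a more subtle construction is required.

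To salvage the argument, I would combine the perfectness-preservation operation announced in the abstract with the fact that every proper induced subgraph of $G$ is perfect. Concretely, choose a vertex $w \notin K_1 \cup K_2$; then $G - w$ is perfect, and $\{K_1, K_2\}$ remains an odd pair of cliques in $G - w$ (every external path of $G - w$ is external in $G$). Iteratively applying the paper's edge-adding theorem, one fills in, one non-edge at a time, all missing edges between $K_1$ and $K_2$ in $G - w$, producing a perfect graph $H$ in which $K_1 \cup K_2$ has become a single clique of size $|K_1| + |K_2|$. Comparing $\omega(H)$ and $\chi(H)$ with $\omega(G)$, while using that $K_1, K_2$ are maximal in $G$, should contradict the tight parameter identities valid in a minimal imperfect graph—most notably Lov\'asz's $|V(G)| = \omega(G)\alpha(G)+1$ and the Padberg-type statement that every vertex lies in exactly $\omega(G)$ maximum cliques.

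The main obstacle is twofold. First, one must verify that the odd-pair-of-cliques property is preserved after each edge addition: filling in a non-edge $uv$ can create brand new chordless external paths that traverse $uv$, and one has to check that they still have odd length and that no even external path is born. Second, translating the final merged clique in $H$ into an arithmetic contradiction back in $G$ is delicate; this is presumably the step where the maximality assumption on $K_1$ and $K_2$ becomes indispensable, since without it $K_1 \cup K_2$ could already be extendable inside $G$ and no tension with $\omega(G)$ would arise.
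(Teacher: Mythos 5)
This statement is stated as a \emph{conjecture} in the paper: the authors do not prove it in general. They observe only that it follows from the Strong Perfect Graph Theorem (odd holes and odd antiholes each admit an even or length-two external path between suitable maximal cliques), and they prove without the SPGT only the special case that a minimal imperfect graph has no non-trivial odd pair of cliques with $|K_1|+|K_2|=\omega(G)$ (Theorem~\ref{pair.l.omega}). So your task was to prove something that is genuinely open, and your proposal does not close it.

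The central gap is that your claimed ``arithmetic contradiction'' is never derived, and in fact the route you choose cannot produce one. You propose to delete a vertex $w$, work in the perfect graph $G-w$, and apply the edge-adding operation there. But applying a perfectness-preserving operation to a perfect graph simply yields another perfect graph $H$ with $\chi(H)=\omega(H)$; no tension with $\alpha(G)\omega(G)+1=|V(G)|$ or with Bland--Huang--Trotter's clique counts can arise, because those identities live in $G$, not in $G-w$. The paper's own argument instead applies the operation to $G$ itself: every \emph{proper} induced subgraph of $G_{K_1\equiv K_2}$ is perfect, and one then shows $G_{K_1\equiv K_2}$ is not partitionable, hence perfect, hence violates Lov\'asz's inequality $\alpha\omega\ge n$. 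That chain closes only when $K_1\cup K_2$ becomes a \emph{new} $\omega$-clique, i.e.\ when $|K_1|+|K_2|=\omega(G)$, so that the count of $\omega$-cliques jumps from $n$ to $n+1$ and contradicts Property~(\ref{graphespar.o.nc}) of Theorem~\ref{graphespar.t.partitionable}. The paper explicitly records that in the remaining cases $|K_1|+|K_2|<\omega(G)$ and $|K_1|+|K_2|>\omega(G)$ --- which are exactly the cases relevant for a pair of disjoint maximal cliques in general --- the resulting graph is respectively partitionable or perfect with a unique maximum clique, and ``this does not seem to lead to a contradiction.'' Two smaller points: your worry about preserving the odd-pair property under one-edge-at-a-time additions is moot, since the operation $G_{K_1\equiv K_2}$ adds all edges at once and Theorem~\ref{pair.l.preserve} is proved for that single global operation; and your opening attempt to extract an even pair of vertices from an odd pair of cliques correctly self-destructs on parity, which is precisely why the paper introduces the clique-pair notion as a substitute for, not a source of, even pairs.
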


Clearly, between two  maximal cliques of an odd  hole, there exists an
external induced path  of even length. Between two  maximal cliques of
an odd  antihole, there exists  an external induced path  of length~2.
But by  the strong perfect  graph theorem, the only  minimal imperfect
are the odd holes and the  odd antiholes. Thus the conjecture above is
true. But we  would like a proof that does not  use the strong perfect
graph theorem.

As    already    mentioned,    it     is    easy    to    see    that
Conjecture~\ref{conj.struct}  holds for bipartite  graphs, line-graphs
of bipartite graphs, and their  complement. Let us prove that it holds
also  for the last  basic class:  double split graphs.  A \emph{double
split graph} (defined  in~\cite{chudvovsky.r.s.t:spgt}) is  any graph
$G$  that  can  be  constructed  as  follows.  Let  $m,n  \geq  2$  be
integers. Let $A = \{a_1, \dots, a_m\}$, $B= \{b_1, \dots, b_m\}$, $C=
\{c_1, \dots, c_n\}$, $D= \{d_1,  \dots, d_n\}$ be four disjoint sets.
Let $G$ have  vertex set $A\cup B \cup  C \cup D$ and edges  in such a
way that:

\begin{itemize}
\item 
  $a_i$ is  adjacent to  $b_i$ for $1  \leq i  \leq m$.  There  are no
  edges between $\{a_i, b_i\}$ and $\{a_{i'}, b_{i'}\}$ for $1\leq i <
  i' \leq m$.
\item 
  $c_j$ is non-adjacent to $d_j$ for  $1 \leq i \leq m$. There are all
  four  edges  between $\{c_j,  d_j\}$  and  $\{c_{j'}, b_{j'}\}$  for
  $1\leq j < j' \leq n$.
\item
  There  are exactly  two  edges between  $\{a_i,  b_i\}$ and  $\{c_j,
  d_j\}$ for  $1\leq i \leq  m$ and  $1 \leq j  \leq n$ and  these two
  edges are disjoint.
\end{itemize}

If $G$ is a double split graph with the notation of the definition, we
may assume  up to  a relabeling  of the $c_j,  d_j$'s that  $a_1$ sees
every $c_j$  and that $b_1$ sees  every $d_j$ (if this  fails for some
$j$, just swap $c_j,  d_j$). Now it is easy to see  that $K_a = \{a_1,
c_1,  \dots, c_n\}$  and $K_b  = \{b_1,  d_1, \dots,  d_n\}$  are both
maximal cliques of  $G$.  The only possible external  induced paths of
length greater  than~1 from  $K_a$ to $K_b$,  are paths from  $c_j$ to
$d_j$ for some $j$.  But such a  path must start in $c_j$, and then go
to some $a_i$,  and then the only  option is to go to  $b_i$, and then
back  to $d_j$.   So,  every external  path  from $K_a$  to $K_b$  has
length~$1$  or~$3$.  So,  $\{K_a, K_b\}$  is  an odd  pair of  maximal
cliques.  Note that there exist  double split graphs that have no even
pair:  $L(K_{3,3}\setminus e)$  is  an example  and arbitrarily  large
examples exist.  However, Conjecture~\ref{conj.struct} holds for every
basic graph.

\section{Odd pairs of cliques in line-graphs of bipartite graphs} 

We  observed in the  introduction that  every line-graph  of bipartite
graph has an odd pair of cliques. In this section, we will see that we
can say something much stronger. But we first need some information on
the structure of line-graphs of bipartite graphs.

\begin{figure}
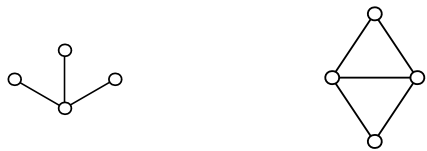

  \parbox[c]{0cm}{\rule{0cm}{2cm}}
  \parbox[c]{4cm}{\center\includegraphics[height=1cm]{oddpair.4}}
  \parbox[c]{4cm}{\center\includegraphics[height=2cm]{oddpair.2}}
  \caption{The claw and the diamond}
\end{figure}

The facts  stated in this paragraph  need careful checking,  but we do
not     prove      them     since     they      are     well     known
(see~\cite{beineke:linegraphs}  and~\cite{harary.holzmann:lgbip}). Let
us consider a graph $G$ that contains no claw and no diamond.  Let $v$
be a vertex  of $G$. Either $v$ belongs to  exactly one maximal clique
of $G$, or $v$ belongs to exactely two maximal cliques of~$G$.  In the
second case, the  intersection of the two cliques  is exactly $\{v\}$.
Let us build a new graph $R$.  Every maximal clique of $G$ is a vertex
of $R$.   Such vertices of  $R$ are called the  \emph{clique vertices}
of~$R$. Every vertex of $G$ that  belongs to a single clique of $G$ is
also of vertex  of $R$. Such vertices of  $R$ are called \emph{pendent
vertices} of $R$.   We add an edge between two  clique vertices of $R$
whenever the two corresponding cliques in $G$ do intersect.  We add an
edge between  a clique vertex $u$ of  $R$ and a pendent  vertex $v$ of
$R$ whenever the pendent vertex $v$ is a vertex of $G$ that belongs to
$u$ seen  as a clique of  $G$. Note that  a pendent vertex of  $R$ has
always degree~1  (the converse  is not true  when $G$ has  a connected
component that  consists in a single  vertex). One can  check that $R$
has no triangle  and that $G$ is isomorphic to  $L(R)$.  This leads us
to the following well known theorem:

\begin{thm}
  \label{pair.l.LB}
  Let $G$  be a  graph.  There exists  a triangle-free graph  $R$ such
  that $G=L(R)$ if and only if $G$ contains no claw and no diamond.
\end{thm}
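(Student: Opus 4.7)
The plan is to prove both implications, noting that the construction for the nontrivial direction is already sketched in the paragraph preceding the statement.

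For the forward direction, suppose $G = L(R)$ with $R$ triangle-free. Claw-freeness holds for any line graph: if an edge $e$ of $R$ is the center of a claw with leaves $f_1, f_2, f_3$, each $f_i$ shares one of the two endpoints of $e$, so by pigeonhole two of them share an endpoint, making them adjacent in $L(R)$ and contradicting the claw. For diamond-freeness, suppose $\{a,b,c,d\}$ induces a diamond in $L(R)$ with missing edge $ad$. Letting $p$ be the common endpoint of the edges $b$ and $c$, a short case split on whether $a$ contains $p$ (with the symmetric case for $d$ otherwise) produces three edges of $R$ that form a triangle, contradicting triangle-freeness.

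For the converse, assume $G$ is claw-free and diamond-free and build $R$ as described just before the statement. Three points need verification. First, for every vertex $v$ of $G$ the neighborhood $N(v)$ is a disjoint union of at most two cliques, because claw-freeness forbids an independent triple in $N(v)$ while diamond-freeness forbids an induced $P_3$ in $N(v)$; consequently $v$ lies in at most two maximal cliques of $G$. Second, two distinct maximal cliques meet in at most one vertex: otherwise, two common vertices $u,v$ together with one private vertex from each clique either directly form an induced $K_4-e$, or else by chasing a further private vertex in the second clique non-adjacent to the first one produces such a diamond. Third, $R$ is triangle-free: a triangle of $R$ would come from three clique-vertices $C_1, C_2, C_3$ whose pairwise intersection vertices $v_{12}, v_{13}, v_{23}$ are distinct (by the first point) and hence form a triangle of $G$ contained in some maximal clique $C$; then $C$ is either one of the $C_i$ (forcing some $v_{jk}$ into three cliques) or distinct from all of them (forcing $v_{12}$ into three cliques), in either case contradicting the first point. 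A direct check shows the natural bijection between vertices of $G$ and edges of $R$ preserves adjacency, giving $G \cong L(R)$.

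The main obstacle is the bookkeeping in step three of the converse, where one must exclude both the $C = C_i$ case and the possibility that the three vertices $v_{ij}$ collapse; the forward diamond-freeness step also requires a small case analysis, but each case is a one-line identification of a triangle in $R$.
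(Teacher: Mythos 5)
Your proof is correct and follows exactly the construction the paper sketches in the paragraph preceding the theorem (the paper itself omits the proof, citing Beineke and Harary--Holzmann as ``well known''); your three verification steps are precisely the facts the paper asserts without proof. One tiny imprecision: in the forward direction, the subcase where both $a$ and $d$ contain the common endpoint $p$ of $b$ and $c$ ends not with a triangle of $R$ but with the contradiction that $a$ and $d$ would then be adjacent in $L(R)$.
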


The following theorem shows that the maximal cliques of the line-graph
of a bipartite graph behave like  the vertices of a bipartite graph in
quite a strong sense.

\begin{thm}
  \label{pair.l.caracennemie}
  Let $G$  be a graph  with no  claw and no  diamond. Then $G$  is the
  line-graph of a  bipartite graph if and only  if the maximal cliques
  of $G$  may be partitioned into two  sets $A$ and $B$  such that for
  every distinct maximal cliques $K_1, K_2$ of $G$ we have:
  \begin{itemize}
  \item
    If $K_1\in A$  and $K_2\in A$, then $\{K_1, K_2\}$  is an odd pair
    of cliques.
  \item
    If $K_1\in B$  and $K_2\in B$, then $\{K_1, K_2\}$  is an odd pair
    of cliques.
  \item
    If $K_1\in A$ and $K_2\in B$,  then $\{K_1, K_2\}$ is an even pair
    of cliques.
  \end{itemize} 
\end{thm}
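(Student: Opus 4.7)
The plan is to invoke Theorem~\ref{pair.l.LB} to write $G=L(R)$ for a triangle-free graph $R$---the root graph constructed just before the theorem---and then translate the partition condition on maximal cliques of $G$ into bipartiteness of $R$.

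The technical heart is the following correspondence: for distinct clique vertices $u,v$ of $R$ with associated maximal cliques $K_u,K_v$ of $G$, the external induced paths from $K_u$ to $K_v$ of length $\ell\geq 0$ are in bijection with the induced paths in $R$ from $u$ to $v$ of length $\ell+1$. An induced path $e_0,\ldots,e_\ell$ in $L(R)$ is a sequence of edges of $R$ in which consecutive edges share a vertex while non-consecutive edges do not, so it determines a vertex-sequence $v_0,\ldots,v_{\ell+1}$ in $R$ with $e_i=v_iv_{i+1}$ and all the $v_i$ distinct; the conditions $e_0\in K_u$, $e_\ell\in K_v$, together with the external condition on interior edges, force $v_0=u$ and $v_{\ell+1}=v$ (otherwise some interior $e_i$ would be incident to $u$ or $v$ and thus lie in $K_u\cup K_v$). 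The length-$0$ case records a common vertex $e\in K_u\cap K_v$, which must be the edge $uv$ of $R$, a path of length~$1$. The reverse map is straightforward.

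For the forward implication, let $R$ be bipartite with bipartition $(X,Y)$ and put $A$ (resp.\ $B$) equal to the set of maximal cliques of $G$ whose clique vertex lies in $X$ (resp.\ $Y$). Bipartiteness of $R$ makes every $u$-to-$v$ path in $R$ have even length when $u,v$ lie on the same side and odd length otherwise; by the correspondence, all external induced paths from $K_u$ to $K_v$ in $G$ have odd length in the first case and even length in the second, yielding the three conditions.

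For the converse, suppose the partition $\{A,B\}$ exists. I would define a $2$-colouring of $V(R)$ by colouring each clique vertex $X$ or $Y$ according to whether its maximal clique lies in $A$ or $B$, and colouring each pendent vertex with the colour opposite to that of its unique clique-vertex neighbour. Every edge of $R$ is either clique-to-pendent (bichromatic by construction) or clique-to-clique; in the latter case the two corresponding maximal cliques share a vertex, so there is an external path of length~$0$, the pair cannot be odd, and the hypothesis forces the two clique vertices onto opposite sides. Thus $R$ is bipartite and $G=L(R)$ is the line-graph of a bipartite graph. The main obstacle I expect is the careful bookkeeping in the correspondence, in particular verifying that the external condition really does force the endpoints of the $R$-walk to be $u$ and $v$, and handling the small cases $\ell=0$ and $\ell=1$ cleanly using the triangle-freeness of $R$.
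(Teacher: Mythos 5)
Your proof is correct and, for the forward direction, essentially the paper's: both translate an external induced path of length $\ell$ in $L(R)$ into a $u$--$v$ walk of length $\ell+1$ in $R$ and read off the parity from the bipartition (the paper does this more tersely, via the interior vertices of the path, and dismisses the even-pair clause with ``similarly''; you spell out the correspondence). Your converse, however, takes a genuinely different route. The paper argues by contrapositive: if $R$ is not bipartite it contains an odd cycle of length at least $5$ by triangle-freeness, every vertex of that cycle is a clique vertex, consecutive cliques on the cycle intersect and so must lie in different parts, and this alternation is impossible around an odd cycle. You instead argue directly, extending the partition $\{A,B\}$ to a proper $2$-colouring of all of $R$ by giving each pendent vertex the colour opposite to its unique clique-vertex neighbour. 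Both converses pivot on the same key observation --- intersecting maximal cliques admit an external path of length $0$, hence cannot form an odd pair, hence adjacent clique vertices of $R$ lie in different parts --- but your packaging avoids the detour through odd cycles and is arguably cleaner. Two small caveats, neither of which is a gap relative to the paper: the paths you obtain in $R$ need not be \emph{induced} (a chord of the $R$-path is invisible in $L(R)$), which is harmless since only parity of walks in a bipartite graph is used; and the cases $\ell\in\{0,1\}$ that you flag are genuinely delicate when $K_u$ and $K_v$ intersect (i.e.\ when $uv\in E(R)$), e.g.\ the path $ux$, $uv$ is an external path of odd length between intersecting cliques --- the paper's ``similarly'' for the even-pair clause glosses over exactly the same point.
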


\begin{proof}
  By  the discussion  above, we  know that  $G$ is  isomorphic  to the
  line-graph of a triangle-free graph  $R$. So, we may assume that $R$
  is  built  from  $G$  like  in the  construction  described  in  the
  discussion above.

  If $R$ is bipartite, then the clique vertices of $R$ are partitioned
  into  two  stable  sets $A$  and  $B$.   This  partition is  also  a
  partition of  the maximal  cliques of~$G$. So,  let $K_1 \in  A$ and
  $K_2  \in A$ be  two maximal  cliques of~$G$.   Note that  $K_1$ and
  $K_2$ are also non-adjacent vertices of~$R$.  So, we know that $K_1$
  and $K_2$ are disjoints cliques  of~$G$.  If there exists an induced
  path of~$G$, of even length,  external from $K_1$ to~$K_2$, then the
  interior vertices of this path (which has length at least~2) are the
  edges of  the interior of a path  of $R$ of odd  length, linking the
  vertex $K_1$ to the  vertex~$K_2$.  This contradicts the bipartition
  of~$R$.  So,  every external induced  path in $G$ between  $K_1$ and
  $K_2$ is of odd length, in  other words, $K_1$ and $K_2$ form an odd
  pair of cliques.   By the same way, we prove that  if $K_1\in B$ and
  $K_2\in  B$, then  $K_1$  and $K_2$  form  an odd  pair of  cliques.
  Similarly, if $K_1 \in A$ and $K_2 \in B$, then $K_1$ and $K_2$ form
  an even pair of cliques.

  If $R$ is not  bipartite, then $R$ has an odd hole  $H$ of length at
  least~5  (because  $R$  is  triangle-free).  Let  $v_1,  v_2,  \dots
  v_{2k+1}$  be the  vertices of  $H$ in  their natural  order.  Every
  vertex  of $H$  has degree  at least~2,  and therefore  is  a clique
  vertex of $R$. So, every vertex $v_i$ is in fact a maximal clique of
  $G$.  If  one manages to partition  the maximal cliques  of $G$ into
  two  sets $A$ and  $B$ as  indicated in  the lemma,  two consecutive
  cliques $v_i$  and $v_{i+1}$ are  not disjoint.  So, they  cannot be
  both  in $A$  or both  in  $B$.  So  in the  sequence $(v_1,  \dots,
  v_{2k+1}, v_1,\dots )$  every second clique is in  $A$ and the other
  ones are  in $B$.  But this  is impossible because  there is  an odd
  number of $v_i$'s.
\end{proof}

\section{An operation that preserves perfectness}

We know  that the contraction of  an even pair $\{x,y\}$  in a perfect
graph  $G$   yields  another  perfect   graph.   What  would   be  the
corresponding operation  in $L(G)$ for  an odd pair of  cliques~?  The
edges  incident  to $x$  form  a clique  $K_x$  of  $L(G)$, and  those
incident to  $y$ form a clique  $K_y$.  The contracted  vertex $xy$ in
$G/xy$ is  incident to the edges that  were incident to $x$  or $y$ in
$G$, and  so becomes  in $L(G)$  a clique obtained  by adding  an edge
between every  vertex of $K_x$ and  every vertex of $K_y$.   So let us
define  the  following  operation  for  any graph  $G$  and  any  pair
$\{K_1,K_2\}$ of  disjoint cliques  of $G$: just  add an  edge between
every  vertex in  $K_1$ and  every vertex  in $K_2$  (if they  are not
adjacent).  The graph obtained  is denoted by $G_{K_1\equiv K_2}$.  We
will see that this operation  preserves perfectness when applied to an
odd pair of cliques.  Before  this, we need a technical lemma, roughly
saying that in $G_{K_1\equiv K_2}$,  there is no other big clique than
the clique induced by~$V(K_1) \cup V(K_2)$:

\begin{lem} \label{pair.l.grosseclique}
  Let $\{K_1, K_2\}$ be an odd pair of cliques in a graph $G$. Let $K$
  be  a  clique of  $G_{K_1  \equiv K_2}$.  There  are  then only  two
  possibilities:
  \begin{itemize}
  \item
    $K$ is a  clique of $G$.
  \item
    $V(K) \subseteq V(K_1) \cup V(K_2)$.
  \end{itemize}
\end{lem}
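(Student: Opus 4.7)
The plan is to argue by contraposition. Suppose $K$ is a clique of $G_{K_1 \equiv K_2}$ that is not a clique of $G$; I want to conclude $V(K) \subseteq V(K_1) \cup V(K_2)$. Since $K$ fails to be a clique in $G$, there must be two vertices $u,v \in V(K)$ with $uv \notin E(G)$ but $uv \in E(G_{K_1 \equiv K_2})$. The only edges added by the operation go between $V(K_1)$ and $V(K_2)$, so such a pair $\{u,v\}$ must satisfy (up to swapping) $u \in V(K_1)$ and $v \in V(K_2)$.

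Now I would suppose for contradiction that some vertex $w \in V(K)$ lies outside $V(K_1) \cup V(K_2)$. Since $K$ is a clique of $G_{K_1 \equiv K_2}$, both $uw$ and $vw$ are edges of $G_{K_1 \equiv K_2}$; because $w \notin V(K_1) \cup V(K_2)$, neither of these edges was added by the operation, so $uw, vw \in E(G)$. Combined with $uv \notin E(G)$, the sequence $u{-}w{-}v$ is then an induced path of $G$ of length $2$, with endpoints in $V(K_1)$ and $V(K_2)$ respectively and a single interior vertex outside $V(K_1) \cup V(K_2)$. This is an external induced path from $K_1$ to $K_2$ of even length, contradicting the hypothesis that $\{K_1, K_2\}$ is an odd pair of cliques.

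There is essentially no obstacle here: the argument is a direct unpacking of the definitions, and the only potentially delicate point is checking that the two-edge walk $u{-}w{-}v$ is genuinely induced and external, which follows because $uv \notin E(G)$ and $w \notin V(K_1) \cup V(K_2)$. No additional case analysis on whether $V(K)$ meets one or both of $V(K_1), V(K_2)$ is needed, because the witnesses $u$ and $v$ produced in the first step automatically place us in the mixed situation required to build the forbidden length-$2$ external path.
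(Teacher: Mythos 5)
Your argument is correct and is essentially identical to the paper's own proof: both extract a non-adjacent pair $v_1 \in K_1$, $v_2 \in K_2$ from the failure of $K$ to be a clique of $G$, and then use a hypothetical vertex of $K$ outside $V(K_1) \cup V(K_2)$ to build a length-$2$ external induced path, contradicting the oddness of the pair. Nothing to add.
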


\begin{proof}
  If $K$ is not  a clique of $G$, then $K$ contains  at least a vertex
  $v_1$ of  $K_1$ and a  vertex $v_2$ of  $K_2$ that are  not adjacent
  in~$G$.   Moreover,  if  $V(K)$  if  not included  in  $V(K_1)  \cup
  V(K_2)$, then  $K$ contains a vertex  $v$ that is  neither in $K_1$,
  nor in $K_2$, and that sees  $v_1$ and $v_2$.  But then, $ v_1 \!-\!
  v \!-\!   v_2 $ is an external  induced path of $G$,  of even length
  from $K_1$ to $K_2$, a contradiction.
\end{proof}

The proof of the next theorem looks like the proof of Fonlupt and Uhry
for  Theorem~\ref{th:fonuhr}.    For  Theorem~\ref{th:fonuhr},  it  is
needed to prove by a  bichromatic exchange that some vertices may have
\emph{the same} color  in some optimal coloring of  a graph.  The only
possible obstruction to  this exchange is a path  of odd length between
them, contradicting the definition of an even pair of vertices. In our
theorem, at  a certain  step we will  need to  prove that there  is an
optimal   coloring  that   gives  \emph{different}   colors   to  some
vertices. The only obstruction to this will be an induced path of even
length, contradicting the definition of odd pairs of cliques.

\begin{thm}
  \label{pair.l.preserve}
  Let $G$ be a perfect graph and  let $\{K_1, K_2\}$ be an odd pair of
  cliques of $G$. Then $G_{K_1 \equiv K_2}$ is a perfect graph.
\end{thm}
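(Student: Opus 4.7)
The plan is to mimic the Fonlupt--Uhry Kempe chain argument (hinted at in the remark before the theorem), using the odd-pair-of-cliques hypothesis to forbid the ``bad'' bichromatic even path that would otherwise block a colour exchange. I argue by induction on $|V(G)|$. Removing any vertex $x$ from $H:=G_{K_1\equiv K_2}$ gives $(G\setminus x)_{K_1'\equiv K_2'}$, where $G\setminus x$ is a smaller perfect graph and $\{K_1',K_2'\}$, with $K_i':=K_i\setminus\{x\}$, is still an odd pair of cliques in $G\setminus x$: every external induced path of $G\setminus x$ between $K_1'$ and $K_2'$ is still an external induced path of $G$ between $K_1$ and $K_2$, hence odd. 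So by the inductive hypothesis every proper induced subgraph of $H$ is perfect, and it remains to verify $\chi(H)\le\omega(H)$.

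Set $N:=\omega(H)$; by Lemma~\ref{pair.l.grosseclique}, $N=\max(\omega(G),|K_1|+|K_2|)$. Any proper $N$-colouring of $G$ in which $V(K_1)$ and $V(K_2)$ use \emph{disjoint} sets of colours is automatically a proper $N$-colouring of $H$, since the only edges of $H$ not in $G$ join $V(K_1)$ to $V(K_2)$ and therefore join vertices of different colours. So the task reduces to producing such a colouring of $G$. Among all proper $N$-colourings of $G$ (which exist because $\omega(G)\le N$ and $G$ is perfect) I choose one minimising the number $|S|$ of colours used on both $K_1$ and $K_2$, and suppose for contradiction that $|S|\ge 1$.

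Pick $c\in S$, with $u\in V(K_1)$ and $v\in V(K_2)$ both coloured $c$. Since $V(K_1)\cup V(K_2)$ has $|K_1|+|K_2|$ vertices but uses only $|K_1|+|K_2|-|S|<N$ distinct colours, some colour $c'\ne c$ is missing from $V(K_1)\cup V(K_2)$ entirely. Consider the subgraph of $G$ induced by the vertices coloured $c$ or $c'$, and let $C$ be its connected component containing $v$. The key claim is that $u\notin C$: if instead $u\in C$, then a shortest path $P$ from $u$ to $v$ in this bichromatic subgraph is induced in $G$ (any chord would either contradict shortness or violate proper colouring), has even length (its colours alternate between $c$ and $c'$ with both endpoints coloured $c$), and its interior vertices lie outside $V(K_1)\cup V(K_2)$ (colour $c'$ is absent from this set, and colour $c$ appears there only at $u$ and $v$). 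Hence $P$ would be an external induced path of even length between $K_1$ and $K_2$, contradicting the odd-pair-of-cliques hypothesis.

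With $u\notin C$, the vertex $v$ is the only element of $V(K_1)\cup V(K_2)$ lying in $C$ (all other vertices of $K_1$ and $K_2$ are coloured outside $\{c,c'\}$ by the choice of $c'$, and $u$ itself is excluded). Swapping the two colours on $C$ yields another proper $N$-colouring of $G$ in which the colour set of $K_1$ is unchanged while that of $K_2$ loses $c$ and gains $c'$, with $c'$ still absent from $K_1$; thus $|S|$ strictly decreases, contradicting the minimal choice. Hence $|S|=0$, the desired colouring exists, and $H$ is perfect. The main obstacle is the verification of the key claim $u\notin C$, which is precisely where the odd-pair hypothesis is needed: it is exactly what rules out an induced even path between $u$ and $v$ whose interior avoids $V(K_1)\cup V(K_2)$.
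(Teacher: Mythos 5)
Your proof is correct and follows essentially the same route as the paper's: reduce to finding an $\omega(G_{K_1\equiv K_2})$-colouring of $G$ in which $K_1$ and $K_2$ use disjoint colour sets, then use a Kempe-chain exchange on a colour pair $(c,c')$ with $c'$ unused on $V(K_1)\cup V(K_2)$, the odd-pair hypothesis ruling out the even external induced path that would put $u$ and $v$ in the same bichromatic component. The only differences are organisational (an extremal choice minimising the number of shared colours instead of the paper's iterated exchanges, and an explicit induction for the induced-subgraph reduction), not mathematical.
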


\begin{proof}
  Let $H'$ be an induced subgraph of $G_{K_1 \equiv K_2}$.  Let $H$ be
  the induced subgraph of $G$  that has the same vertex-set than $H'$.
  Clearly, $V(K_1) \cap V(H)$ and  $V(K_2) \cap V(H)$ form an odd pair
  of cliques  in $H$  and $H'=H_{(K_1 \cap  H) \equiv (K_2  \cap H)}$.
  So, to prove  the theorem, it suffices to  check $\chi(G_{K_1 \equiv
  K_2}) =  \omega(G_{K_1 \equiv  K_2})$.  Let us  suppose that  $G$ is
  colored with $\omega(G)$ colors.  We  look for a coloring of $G_{K_1
  \equiv K_2}$ with $\omega(G_{K_1 \equiv K_2})$ colors.

  Let us  first color the  vertices that are  neither in $K_1$  nor in
  $K_2$: we  give them their  color in $G$.  If  $\omega(G_{K_1 \equiv
  K_2}) > \omega(G)$, then by Lemma~\ref{pair.l.grosseclique}, we know
  that $V(K_1) \cup V(K_2)$ induces the only maximum clique of $G_{K_1
  \equiv K_2}$.  So, whatever the sizes of $K_1, K_2$, we take $\gamma
  = \max  (0, |V(K_1) \cup V(K_2)|  - \omega(G))$ new  colors.  We use
  them to color $\gamma$ vertices in $V(K_1) \cup V(K_2)$.  So, we are
  left with $|V(K_1)  \cup V(K_2)| - \gamma$ vertices  in $V(K_1) \cup
  V(K_2)$: let  us give them their  color in~$G$.  We  may assume that
  there is  a vertex $v_1$ in $K_1$  and a vertex $v_2$  in $K_2$ with
  the same  color (say  red) for otherwise  we have  an $\omega(G_{K_1
  \equiv K_2})$-coloring of $G_{K_1 \equiv K_2}$ and the conclusion of
  the lemma holds.

  So there is a  color used in $G$ (say blue) that  is used neither in
  $K_1$ nor in $K_2$.  Let $C$ be  the set of vertices of $G$ that are
  red or blue. The set $C$  induces a bipartite subgraph of $G$ and we
  call $C_1$  the connected component  of $v_1$ in this  subgraph.  If
  $v_2 \in C_1$, then a shortest  path in $C_1$ from $v_1$ to $v_2$ is
  an induced  path of $G$, of  even length from $K_1$  to $K_2$.  This
  path is  external because  there is no  blue vertex in  $V(K_1) \cup
  V(K_2)$.   This  contradicts  the  definititon  of an  odd  pair  of
  cliques, so $v_1$ and $v_2$  are not in the same connected component
  $C_1$.  So,  we can exchange the  colors red and blue  in $C_1$, and
  give the color  blue to $v_1$, without changing  the color of $v_2$.
  We can do this again as long as there are vertices of the same color
  in  $K_1 \cup  K_2$.  Finally,  we obtain  an  $\omega(G_{K_1 \equiv
  K_2})$-coloring of $G_{K_1 \equiv K_2}$.
\end{proof}

\section{Odd pairs of cliques in minimal imperfect graphs}

In this  section, we will see  that in a minimal  imperfect graph $G$,
there  is no  pair of  odd cliques  $(K_1, K_2)$  with  $|K_1|+|K_2| =
\omega(G)$.   This will  be proven  without using  the  strong perfect
graph theorem. We first need some results on minimal imperfect graphs.

\begin{thm}[Lov\'asz, \cite{lovasz:pg}]
  \label{graphespar.t.lovasz}
  A graph  $G$ is perfect  if and only  if for every  induced subgraph
  $G'$ we have $\alpha(G') \omega(G') \geq |V(G')|$.
\end{thm}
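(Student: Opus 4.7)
The plan has an easy forward direction and a hard backward direction. The forward direction is immediate: if $G$ is perfect, every induced subgraph $G'$ satisfies $\chi(G') = \omega(G')$, so $V(G')$ decomposes into $\omega(G')$ stable sets each of size at most $\alpha(G')$, yielding $|V(G')| \leq \alpha(G')\omega(G')$.

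For the backward direction I would prove the contrapositive: any imperfect graph contains an induced subgraph violating the inequality. Passing to a minimal imperfect induced subgraph, it suffices to show that for every minimal imperfect $G$, with $n = |V(G)|$, $\omega = \omega(G)$, and $\alpha = \alpha(G)$, one has $n \geq \alpha\omega + 1$. The working hypothesis is that every proper induced subgraph $G - v$ is perfect and hence $\omega$-colorable, while $G$ itself is not.

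The central tool I would assemble is Lov\'asz's \emph{replication lemma}: if $H$ is perfect and $v \in V(H)$, replacing $v$ by a stable set of any positive size (each copy inheriting the neighborhood of $v$) produces a perfect graph. I would prove this by induction on $|V(H)|$, using Theorem~\ref{th:lovasz} to shuttle between stable-set replication and the dual clique-replication; this lets the induction step reduce to a short manipulation of an optimal coloring of $H$. With the replication lemma in hand, for a minimal imperfect $G$ I would assign each vertex $v$ a multiplicity $h(v)$ (a natural choice is the number of maximum stable sets of $G$ containing $v$), form the blow-up $G^{h}$, and then compare $|V(G^h)|$, $\omega(G^h)$, and $\alpha(G^h)$, using the $\omega$-colorings of the subgraphs $G - v$ as bounds so that a double count forces $n \geq \alpha\omega + 1$.

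The main obstacle is this final double count: choosing the multiplicities so that perfectness of the $G-v$'s and imperfectness of $G$ both bite, and then extracting a clean numerical inequality. Establishing the replication lemma itself is also delicate but standard; everything else in the argument is bookkeeping.
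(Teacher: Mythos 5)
First, note that the paper does not prove this statement at all: it is quoted as a known theorem of Lov\'asz with a citation to \cite{lovasz:pg}, so there is no in-paper proof to compare against. Your forward direction is correct and complete. Your backward direction, however, is a plan with a genuine hole exactly where the theorem lives. The reduction to ``every minimal imperfect graph $G$ satisfies $n\geq\alpha\omega+1$'' is right, and the replication lemma is indeed the classical tool, but the combinatorial engine that actually closes the count is missing from your sketch. In Lov\'asz's argument one does not take $h(v)$ to be the number of maximum stable sets of $G$ containing $v$; one first builds a specific family of $\alpha\omega+1$ stable sets, namely a maximum stable set $A_0=\{a_1,\dots,a_\alpha\}$ together with the $\omega$ colour classes of an $\omega$-colouring of each perfect graph $G-a_s$, and then proves the key lemma that \emph{every} $\omega$-clique of $G$ is disjoint from exactly one member of this family (using that $\omega(G-S)=\omega(G)$ for every stable set $S$, since otherwise $\chi(G)\le\omega(G)$). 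This yields $\alpha\omega+1$ stable sets $A_i$ and $\alpha\omega+1$ cliques $K_i$ with $|K_i\cap A_j|=1-\delta_{ij}$, from which the contradiction $n\geq\alpha\omega+1>\alpha\omega\geq n$ follows either by a rank argument on the incidence matrices (the product is $J-I$, which is nonsingular) or by Lov\'asz's original count. Your proposed $h$ is tied to the wrong family: the colour classes in the relevant list need not be maximum stable sets, and without the ``each $\omega$-clique misses exactly one'' lemma no choice of multiplicities will make the double count bite.

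There is a second, smaller but real, difficulty in your plan: the replication lemma applies only to \emph{perfect} graphs, whereas the graph $G$ you propose to blow up is the minimal imperfect graph itself. You must either avoid blowing up $G$ entirely (as in the incidence-matrix finish above), or arrange that $h(v)=0$ for some vertex $v$ so that $G^{h}$ is a blow-up of the perfect graph $G-v$; as stated, ``form the blow-up $G^{h}$'' of $G$ and invoke perfectness of $G^h$ is not legitimate. So the proposal identifies the right general landscape but omits the two ideas that constitute the proof: the construction of the $\alpha\omega+1$ stable sets from the colourings of the $G-a_s$, and the uniqueness lemma for the disjoint $\omega$-cliques.
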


Lov\'asz also introduced an important notion. Let $p, q \geq 1$ be two
integers. A   graph  $G$   is  \emph{$(p,
q)$-partitionable} if  and only  if for every  vertex $v$ of  $G$, the
graph $G \setminus v$ can be  partitioned into $p$ cliques of size $q$
and  also into  $q$ stable  sets  of size  $p$.  The  theorem to  come
follows from Theorem~\ref{graphespar.t.lovasz}:

\begin{thm}[Lov\'asz, \cite{lovasz:pg}]
  \label{graphespar.t.minimparfait}
  Let  $G$ be a minimal imperfect graph. Then  $G$ is partitionable.
\end{thm}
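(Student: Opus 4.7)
The plan is to derive the statement directly from Theorem~\ref{graphespar.t.lovasz} and the perfect graph theorem (Theorem~\ref{th:lovasz}). Write $\alpha = \alpha(G)$, $\omega = \omega(G)$, $n = |V(G)|$. I claim $G$ is $(\alpha,\omega)$-partitionable, which means producing, for each vertex $v$, a partition of $G\setminus v$ into $\alpha$ cliques of size $\omega$ and also into $\omega$ stable sets of size $\alpha$.

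First I would pin down the numerical identity $n = \alpha\omega + 1$. Since $G$ is imperfect, Theorem~\ref{graphespar.t.lovasz} gives $\alpha\omega < n$. On the other hand, for every vertex $v$ the graph $G\setminus v$ is a proper induced subgraph, hence perfect, so the same theorem applied to $G\setminus v$ yields $\alpha(G\setminus v)\cdot\omega(G\setminus v) \geq n-1$. Since $\alpha(G\setminus v) \leq \alpha$ and $\omega(G\setminus v) \leq \omega$, this forces $\alpha\omega \geq n-1$. Combining the two inequalities we obtain $n = \alpha\omega+1$ and, along the way, the sharp equalities $\alpha(G\setminus v) = \alpha$ and $\omega(G\setminus v) = \omega$ for every vertex $v$.

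Next I would build the two partitions. By perfectness of $G\setminus v$ and the equality $\omega(G\setminus v) = \omega$, we have $\chi(G\setminus v) = \omega$, so there is a proper coloring of $G\setminus v$ with exactly $\omega$ color classes. Each color class is a stable set, hence of size at most $\alpha(G\setminus v) = \alpha$; since $\omega$ such classes cover $\alpha\omega$ vertices, every color class has size exactly $\alpha$. This is the desired partition into $\omega$ stable sets of size $\alpha$. To get the clique partition, I would pass to the complement: by Theorem~\ref{th:lovasz}, $\overline{G}$ is also minimal imperfect, with $\alpha(\overline{G}) = \omega$ and $\omega(\overline{G}) = \alpha$. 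Applying the same coloring argument to $\overline{G}\setminus v = \overline{G\setminus v}$ produces a partition of $\overline{G\setminus v}$ into $\alpha$ stable sets of size $\omega$, which back in $G$ is a partition of $G\setminus v$ into $\alpha$ cliques of size $\omega$.

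The argument is essentially bookkeeping once Theorem~\ref{graphespar.t.lovasz} is in hand; the only genuine subtlety is the double-counting step that forces each color class to have \emph{exactly} $\alpha$ vertices, which rests on the sharp identity $n = \alpha\omega + 1$. The perfect graph theorem is invoked only to deliver the complementary partition; without it, one would still obtain one of the two required decompositions but not both.
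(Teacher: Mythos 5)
Your proof is correct and follows exactly the route the paper indicates (it states only that the result ``follows from Theorem~\ref{graphespar.t.lovasz}'' and gives no further argument): establish $n=\alpha\omega+1$ from Lov\'asz's inequality applied to $G$ and to each $G\setminus v$, extract the $\omega$ stable sets of size $\alpha$ from an optimal coloring, and get the clique partition by complementation. The only nit is your opening inference ``$G$ imperfect $\Rightarrow \alpha\omega<n$'': Theorem~\ref{graphespar.t.lovasz} only guarantees that \emph{some} induced subgraph violates the inequality, and you need minimality (all proper induced subgraphs are perfect, hence satisfy it) to conclude that the violator is $G$ itself --- a one-line point you clearly have available but should state.
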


Partitionable    graphs    have    several   interesting    properties
(see~\cite{preissmann.sebo:minimal}        for        a       survey).
Padberg~\cite{padberg:74} proved the  following in the particular case
of minimal imperfect graphs:

\begin{thm}[Bland, Huang, Trotter \cite{bland.h.t:79}]
  \label{graphespar.t.partitionable}
  Let $G$ be a   graph $(p, q)$-partitionable with $n=p
  q +1$ vertices. Then:
  \begin{enumerate}
  \item
    $\alpha(G) = p$ and $\omega(G) = q$.
  \item 
    \label{graphespar.o.nc}
    $G$ has exactly $n$ cliques of size~$\omega$.
  \item
    $G$ has exactly $n$ stable sets of size~$\alpha$.
  \item 
    Every  vertex  of  $G$  belongs  to exactly  $\omega$  cliques  of
    size~$\omega$.
  \item 
    Every vertex  of $G$  belongs to exactly  $\alpha$ stable  sets of
    size~$\alpha$.
  \item 
    \label{graphespar.o.cs}
    Every clique of $G$ of  size $\omega$ is disjoint from exactly one
    stable set of $G$ of size~$\alpha$.
  \item 
    \label{graphespar.o.sc}
    Every stable set of~$G$ of  size $\alpha$ is disjoint from exactly
    one clique of~$G$ of size~$\omega$.
  \item 
    \label{graphespar.o.unicolor}
    For every  vertex $v$  of $G$,  there is a  unique coloring  of $G
    \setminus v$ with $\omega$ colors.
  \end{enumerate}
\end{thm}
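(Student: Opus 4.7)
The plan is to prove (1) directly from the two partitions, obtain the global counting statements (2)--(5) via a linear-algebra argument on the vertex--max-clique incidence matrix, and finally deduce the disjointness pairing (6)--(7) and the uniqueness (8) from the resulting matrix identities.

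For (1) I would fix a vertex $v$: the given clique partition of $G \setminus v$ forces $\alpha(G \setminus v) \leq p$ (a stable set meets each clique in at most one vertex), while any part of the stable partition is already a stable set of size $p$, so $\alpha(G \setminus v) = p$. If $\alpha(G) \geq p+1$, taking $v$ outside some stable set of that size would contradict this; hence $\alpha(G) = p$, and the dual argument applied to $\overline{G}$ gives $\omega(G) = q$.

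For (2)--(7) let $A$ be the $C \times n$ incidence matrix of max cliques versus vertices, and $B$ the $S \times n$ incidence matrix of max stable sets. For each vertex $v$, the clique partition $\mathcal{K}_v$ of $G \setminus v$ produces a row vector $x_v \in \{0,1\}^C$ with exactly $p$ ones satisfying $x_v A = \mathbf{1}^T - e_v^T$; stacking these over all $v$ yields the matrix identity $XA = J - I$ with $X$ of size $n \times C$. Since $J - I$ is invertible over $\mathbb{Q}$ (eigenvalues $n-1$ and $-1$), we get $\mathrm{rank}(A) = n$ and therefore $C \geq n$. The symmetric construction with stable partitions gives $YB = J - I$ and $S \geq n$. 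The extra input is that $AB^T \in \{0,1\}^{C \times S}$, because a clique and a stable set share at most one vertex; combining this with the two identities and comparing ranks/sizes forces $C = S = n$ and $AB^T = J - I$ up to row/column permutation, simultaneously yielding (2), (3), (6) and (7). A double-count then settles (4) and (5): the number of vertex-clique incidences is $Cq = nq$, so the average number of max cliques through a vertex is exactly $q$, and the rigidity imposed by $XA = J - I$ ensures this value is attained at every vertex.

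For (8), any $\omega$-coloring of $G \setminus v$ partitions the $pq$ vertices of $V \setminus v$ into $q$ stable sets of size $\leq \alpha = p$ each, hence of size exactly $p$, so every color class is a max stable set of $G$ avoiding $v$. By (5) and (7), the avoid-$v$ max stable sets are in rigid bijection with the max cliques through $v$ (of which there are exactly $q$ by (4)), and tracing through this bijection with the incidence equations shows the partition into color classes is forced, giving uniqueness. I expect the main obstacle to lie precisely in establishing the reverse inequality $C \leq n$: the bound $C \geq n$ falls out of $XA = J - I$ immediately, whereas the matching upper bound, and with it the clean $AB^T = J - I$ identity that drives everything else, requires the $\{0,1\}$-valued structure of $AB^T$ to be exploited carefully in conjunction with both partition identities.
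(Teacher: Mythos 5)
The paper gives no proof of this theorem---it is quoted from Bland, Huang and Trotter (with Padberg credited for the minimal imperfect case)---so there is no internal proof to compare against; I can only measure your outline against the standard argument in the literature, which is indeed the linear-algebraic one you are reconstructing. Your part (1) is fine (modulo the hypothesis $p,q\ge 2$, which the paper's statement omits and without which the theorem is actually false), the lower bounds are fine ($XA=J-I$ with $J-I$ nonsingular gives $\mathrm{rank}(A)=n$, hence at least $n$ maximum cliques, and dually for stable sets), and your reduction of (8) to (2)--(7) via linear independence of the characteristic vectors of the $\alpha$-stable sets is sound.

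The gap is exactly where you suspect it, and it is genuine: ``combining this with the two identities and comparing ranks/sizes forces $C=S=n$'' is not an argument. Rank gives nothing in that direction---a $C\times n$ zero-one matrix of rank $n$ can have arbitrarily many distinct rows, so $\mathrm{rank}(A)=n$ together with $AB^T\in\{0,1\}^{C\times S}$ places no upper bound on $C$. The classical way to close this is constructive, not rank-theoretic: fix one $\alpha$-stable set $S_0=\{u_1,\dots,u_p\}$ and adjoin to it the $pq$ colour classes of the stable-set partitions of the graphs $G\setminus u_j$, giving $n$ stable sets with every vertex in exactly $p$ of them; pair each $S_i$ with an $\omega$-clique $K_i$ disjoint from it (exactly one clique of the partition $\mathcal{K}_w$ of $G\setminus w$, for any $w\in S_i$, misses $S_i$); then the column-sum computation $\sum_i|S_i\cap K_j|=p\,\omega=n-1$, combined with $|S_i\cap K_j|\le 1$ and $|S_j\cap K_j|=0$, forces the \emph{square} identity $AB^T=J-I$ for these two families of size $n$, whence both are linearly independent; finally any further $\alpha$-stable set $S$ satisfies $x_SB^T=\mathbf{1}^T-e_j^T$ for some $j$ by the same sum computation, and nonsingularity of $B$ gives $S=S_j$ (dually for cliques). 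That chain is what delivers $C=S=n$ and (6)--(7); likewise your averaging argument for (4)--(5) only yields the mean, and the pointwise count again comes out of the square nonsingular identity rather than out of ``rigidity'' of $XA=J-I$. So the skeleton is the right one, but the load-bearing step is missing rather than merely deferred.
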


If $K_1$ and  $K_2$ are two disjoint subcliques of  a clique $K$, then
they form an odd pair of cliques.  In this case, we say that $K_1$ and
$K_2$  form  a \emph{trivial  odd  pair  of  cliques}.  The  following
theorem is a particular case of Conjecture~\ref{conj.mini}:

\begin{thm} 
  \label{pair.l.omega}
  Let $G$ be  a minimal imperfect graph.  Let $\{K_1,  K_2\}$ be a non
  trivial  odd  pair  of  cliques  of  $G$.   Then  $|K_1|+|K_2|  \neq
  \omega(G)$.
\end{thm}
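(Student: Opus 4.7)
The plan is to derive a contradiction by proving that, under the hypothesis $|K_1|+|K_2|=\omega(G)=q$, the graph $G_{K_1 \equiv K_2}$ is perfect. Note that the odd pair being non-trivial means $K_1$ and $K_2$ are disjoint and $V(K_1) \cup V(K_2)$ is \emph{not} a clique of $G$. By Lemma~\ref{pair.l.grosseclique}, every clique of $G_{K_1 \equiv K_2}$ is either a clique of $G$ or is contained in $V(K_1) \cup V(K_2)$, so $\omega(G_{K_1 \equiv K_2})=q$; once perfectness is in hand, a $q$-coloring of $G_{K_1 \equiv K_2}$ restricts to a $q$-coloring of $G$ (which has fewer edges), contradicting $\chi(G) > \omega(G)$ and closing the argument.

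To show that $G_{K_1 \equiv K_2}$ is perfect, I would argue that it has no minimal imperfect induced subgraph. Suppose $H$ were such a subgraph on a proper vertex subset $V(H) \subsetneq V(G)$, and let $H' := G[V(H)]$. By the minimality of $G$, the graph $H'$ is perfect, and a routine check shows that $\{K_1 \cap V(H), K_2 \cap V(H)\}$ is an odd pair of cliques in $H'$: any external induced path in $H'$ between these sub-cliques is also an external induced path in $G$ between $K_1$ and $K_2$ (vertex sets nest and induced-ness is inherited), hence has odd length. Theorem~\ref{pair.l.preserve} applied to $H'$ then gives that $H'_{(K_1 \cap V(H)) \equiv (K_2 \cap V(H))}$ is perfect; but this graph is exactly $H$, contradicting that $H$ is (minimal) imperfect.

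The main obstacle is to exclude the remaining case $H = G_{K_1 \equiv K_2}$, i.e.\ that $G_{K_1 \equiv K_2}$ itself is minimal imperfect. Here I would count maximum cliques. If $G_{K_1 \equiv K_2}$ were minimal imperfect, then since $\omega=q$ and $|V|=pq+1$ with $p=\alpha(G)$, Theorems~\ref{graphespar.t.minimparfait} and~\ref{graphespar.t.partitionable} make it $(p,q)$-partitionable with exactly $pq+1$ maximum cliques (property~\ref{graphespar.o.nc}). On the other hand, by Lemma~\ref{pair.l.grosseclique} each maximum clique of $G_{K_1 \equiv K_2}$ is either a maximum clique of $G$ or equals $V(K_1) \cup V(K_2)$; all $pq+1$ maximum cliques of $G$ survive the operation (they remain cliques of size $q=\omega(G_{K_1 \equiv K_2})$), and $V(K_1) \cup V(K_2)$ is a genuinely new maximum clique since the non-triviality of the odd pair forbids it from being a clique of $G$. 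This yields at least $pq+2$ maximum cliques in $G_{K_1 \equiv K_2}$, the desired contradiction.
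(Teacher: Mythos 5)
Your proposal is correct and follows essentially the same route as the paper: proper induced subgraphs of $G_{K_1\equiv K_2}$ are perfect via Theorem~\ref{pair.l.preserve}, and $G_{K_1\equiv K_2}$ itself cannot be minimal imperfect because the new $\omega$-clique $V(K_1)\cup V(K_2)$ would violate the count of maximum cliques in a partitionable graph (Property~\ref{graphespar.o.nc} of Theorem~\ref{graphespar.t.partitionable}). The only cosmetic difference is the final contradiction: you restrict an $\omega$-coloring of the perfect graph $G_{K_1\equiv K_2}$ back to $G$, whereas the paper invokes Lov\'asz's inequality $\alpha(G)\omega(G)<|V(G)|$; both are valid.
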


\begin{proof}
  Suppose      $|K_1|     +      |K_2|      =     \omega(G)$.       By
  Lemma~\ref{pair.l.grosseclique},   $\omega(G_{K_1  \equiv   K_2})  =
  \omega(G)$.  Moreover, $\alpha(G_{K_1  \equiv K_2}) \leq \alpha(G)$.
  And   by  Theorem~\ref{graphespar.t.lovasz},   we   have  $\alpha(G)
  \omega(G) < |V(G)|$.

  By the definition, every induced  subgraph of $G$ is perfect. So, by
  Theorem~\ref{pair.l.preserve},  every  induced  subgraph of  $G_{K_1
  \equiv K_2}$  is perfect.  Note  that the $\omega$-clique  $K_1 \cup
  K_2$ of  $G_{K_1\equiv K_2}$  is not a  clique of $G$  since $\{K_1,
  K_2\}$ is  not a trivial odd  pair of cliques.  So,  by counting the
  cliques  and by the  fact that  $G$ is  partitionable, we  know that
  $G_{K_1   \equiv   K_2}$    is   not   partitionable   (because   of
  Property~(\ref{graphespar.o.nc})                                   of
  Theorem~\ref{graphespar.t.partitionable}).   All  its subgraphs  are
  perfect, so  by Theorem~\ref{graphespar.t.minimparfait}, we  know it
  is perfect.  But we have: %
  \[
  \alpha(G_{K_1  \equiv K_2}) \omega(G_{K_1  \equiv K_2})  
  \leq \alpha(G) \omega(G)  
  < |V(G)| =  |V(G_{K_1 \equiv K_2})|
  \]

  \noindent This contradicts Theorem~\ref{graphespar.t.lovasz}.
\end{proof}

By the preceding theorem, if $\{K_1,  K_2\}$ is an odd pair of cliques
in a minimal imperfect graph $G$, there are two cases:

\begin{itemize}
\item{$|K_1|+|K_2|<\omega(G)$}

  \noindent In  this case,  interestingly, the edges  that we  add when
  constructing $G_{K_1\equiv  K_2}$ do not  create any $\omega$-clique
  by  Lemma~\ref{pair.l.grosseclique}.  Moreover,  these edges  do not
  destroy any $\alpha$-stable.  Let us prove this:

\begin{proof}
  Suppose that an $\alpha$-stable set of~$G$ is destroyed.  This means
  that there  exists two vertices  $v_1\in K_1$ and $v_2\in  K_2$ that
  are     in    some     $\alpha$-stable    set~$S$     of~$G$.     By
  Property~(\ref{graphespar.o.sc})                                   of
  Theorem~\ref{graphespar.t.partitionable},     there    exists    one
  $\omega$-clique  $K$ disjoint  from~$S$.   Let $v\in  V(K)$. By  the
  definition  of   partitionable  graphs,  $G  \setminus   v$  can  be
  partitioned into  $\omega$ stable sets  of size $\alpha$.   At least
  one  of these  stable sets  (say~$S'$) is  disjoint from  $K$, since
  $K\setminus    v$    contains    $\omega    -    1$~vertices.     By
  Property~(\ref{graphespar.o.cs})                                   of
  Theorem~\ref{graphespar.t.partitionable}, we know that $S'=S$. So we
  have found  in~$G$ a  vertex $v$  such that $G  \setminus v$  can be
  optimaly colored giving to $v_1$  and $v_2$ the same color, say red.
  But since  $|K_1|+|K_2|<\omega(G)$, there exists a  color (say blue)
  that is not used in $K_1  \cup K_2$. By a bichromatic exchange (like
  in  the  proof  of  Theorem~\ref{pair.l.preserve}), we  can  find  a
  coloring of $G  \setminus v$ that gives the same  red color to $v_1$
  and color  blue to  $v_2$ (if  such an exchange  fails, there  is an
  external  induced path  of even  length between  $K_1$ and  $K_2$, a
  contradiction).   Finaly   we  found  two   different  colorings  of
  $G\setminus             v$.              This            contradicts
  Property~(\ref{graphespar.o.unicolor})                             of
  Theorem~\ref{graphespar.t.partitionable}.
\end{proof}

\noindent  So   $G_{K_1  \equiv  K_2}$  is   a  partitionable  graph.
Seemingly, this does not lead to a contradiction.

\vspace{1ex}

\item{$|K_1|+|K_2|>\omega(G)$}\\
  \noindent 
  In  this case,  by  Lemma~\ref{pair.l.grosseclique}, $G_{K_1  \equiv
  K_2}$ has a unique maximum clique: $K_1\cup K_2$.  This graph is not
  partitionable,  all its  induced subgraphs  are perfect,  so  it is
  perfect. One more time, this does not seem to lead to contradiction.
\end{itemize}

\section{Odd pairs of cliques in Berge graphs}

To  prove  Conjecture~\ref{conj.struct},  one  could try  to  use  the
approach    that    worked   for    the    decomposition   of    Berge
graphs~\cite{chudvovsky.r.s.t:spgt}: first, consider the case when $G$
has a ``substantial'' line-graph $H$  as an induced subgraph.  We know
that      $H$     has     an      odd     pair      of     cliques~(by
Theorem~\ref{pair.l.caracennemie}).   Then, one  could hope  that this
pair  of cliques  is likely  to  somehow ``grow''  to an  odd pair  of
cliques of the whole graph.  A  \emph{star-cutset} in a graph $G$ is a
set $C$ of vertices such  that $G\setminus C$ is disconnected and such
that there exists a vertex in  $C$ that sees all the other vertices of
$C$.       Star     cutsets      have      been     introduced      by
Chv\'atal~\cite{chvatal:starcutset},  who  proved   that  they  are  a
``structural fault'' that cannot occur in minimal imperfect graph.  It
is known however that some non-basic Berge graphs have no star-cutset.
The following lemma shows that there is something wrong in the idea of
making the odd pair cliques ``grow'':  it can work only in graphs that
have a star-cutset.

\begin{lem}
  Let $\{K_1, K_2\}$ be an odd pair of cliques of a graph $G$. Suppose
  that $K_2$  is a  maximal clique of  $G$. Let  $K'_1 \neq K_1$  be a
  sub-clique of $K_1$.  If $\{K'_1,  K_2\}$ is an odd pair of cliques,
  then $G$ has a star cutset.
\end{lem}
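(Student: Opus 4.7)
The plan is to exhibit a star cutset. Pick $v\in K_1\setminus K'_1$ (nonempty since $K'_1\neq K_1$) and a vertex $w\in K'_1$ (assuming $K'_1$ nonempty; the case $K'_1=\emptyset$ reduces easily by working in the component of $v$). Since $K_1$ is a clique, $w\in N(v)$, and since $K_2$ is maximal with $w\notin K_2$, there exists $u\in K_2\setminus N[w]$. The candidate star cutset is
\[
C \;=\; \{w\}\cup\bigl(N(w)\cap N(v)\bigr),
\]
which lies in $N[w]$ with $w$ as a dominating vertex, so the ``star'' requirement is satisfied by construction. Note that $K_1\setminus\{v\}\subseteq C$, since every vertex of $K_1$ other than $v$ is adjacent to both $v$ and $w$; in particular, $v$ has no $K_1$-neighbor left in $G\setminus C$.

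Next I would verify that $u\in V(G)\setminus C$ and that $v$ and $u$ are in different components of $G\setminus C$. Suppose for contradiction that a shortest induced path $P\colon v=z_0,z_1,\ldots,z_m=u$ exists in $G\setminus C$. Standard shortest-path considerations, combined with $K_1\setminus\{v\}\subseteq C$, show that the interior of $P$ avoids $V(K_1)\cup V(K_2)$: a $K_1$-vertex in the interior would lie in $C$, and a $K_2$-vertex in the interior would yield a shortcut. Hence $P$ is an external induced $K_1$-to-$K_2$ path, and by the odd pair condition for $\{K_1,K_2\}$ its length $m$ is odd.

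The key step uses the second odd pair condition. Since $z_1\in N(v)\setminus C$ we have $wz_1\notin E$ (else $z_1\in N(w)\cap N(v)\subseteq C$); moreover $z_1\notin K_2$, for otherwise $w,v,z_1$ would be an induced length-$2$ external path from $K'_1$ to $K_2$, contradicting $\{K'_1,K_2\}$ being an odd pair. Now consider the walk $P^*\colon w,v,z_1,z_2,\ldots,z_m$ of length $m+1$, which is even. If $P^*$ is induced, then it is an external induced path from $K'_1$ to $K_2$ of even length (its interior $\{v,z_1,\ldots,z_{m-1}\}$ avoids $V(K'_1)\cup V(K_2)$, since $v\in K_1\setminus K'_1$ and the $z_i$ avoid $V(K_1)\cup V(K_2)$), again contradicting the $\{K'_1,K_2\}$ odd pair hypothesis.

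The main obstacle is the case where $P^*$ is not induced because $w$ has a chord $wz_i$ for some $i\geq 2$. I would resolve this by choosing $i$ to be the largest such index; then $w,z_i,z_{i+1},\ldots,z_m$ is an induced external $K'_1$-to-$K_2$ sub-path of length $m-i+1$, which must be odd by the hypothesis, forcing $i$ to be odd. Iterating this refinement, together with an appropriate choice of shortest path $P$ (over all starting vertices in $K_1\setminus K'_1$ and terminal vertices in $K_2\setminus N[w]$) to minimize the position of the ``first chord,'' is intended to produce the contradiction. This chord-handling step is the delicate part where the combination of both odd pair hypotheses must be leveraged in tandem.
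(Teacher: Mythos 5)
Your candidate cutset is too small, and this is a fatal gap rather than a loose end you could tighten: the set $C=\{w\}\cup\bigl(N(w)\cap N(v)\bigr)$ need not separate $v$ from $u$, so the ``delicate chord-handling step'' you defer cannot be completed --- the claim it is meant to establish is false. Concretely, take $V(G)=\{v,w,z_1,z_2,z_3,z_4,u,u'\}$ with edges $vw$, $vz_1$, $z_1z_2$, $z_2z_3$, $z_3z_4$, $z_4u$, $uu'$ and $wz_3$, and set $K_1=\{v,w\}$, $K'_1=\{w\}$, $K_2=\{u,u'\}$. Then $K_2$ is a maximal clique, the external induced paths from $K_1$ to $K_2$ are $v\!-\!z_1\!-\!z_2\!-\!z_3\!-\!z_4\!-\!u$ (length $5$) and $w\!-\!z_3\!-\!z_4\!-\!u$ (length $3$), and the only external induced path from $K'_1$ to $K_2$ is $w\!-\!z_3\!-\!z_4\!-\!u$, because the route through $v$ is killed by the chord $wz_3$; so both odd-pair hypotheses hold. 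Yet $N(w)\cap N(v)=\emptyset$, so $C=\{w\}$, and $G\setminus C$ is a single path containing both $v$ and $u$. The example also shows why your iteration stalls: here $m=5$ and the largest chord index is $i=3$, odd exactly as your parity computation predicts, and no contradiction can follow because none exists.

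The repair is to enlarge the star, which is what the paper does. It takes $a\in K'_1$, $c\in K_1\setminus K'_1$, a vertex $b\in K_2$ non-adjacent to $a$ (this exists by maximality of $K_2$), and uses $\{a\}\cup N(a)\setminus\{c\}$ as the cutset, separating $c$ from $b$. Since every neighbour of $a$ other than $c$ is deleted, an induced $c$--$b$ path $P$ surviving the deletion has no interior vertex in $K_1$ and admits no chord to $a$ at all; if its interior meets $K_2$, one first replaces $b$ by the unique interior vertex of $P$ in $K_2$ (the neighbour of $b$ on $P$). The truncated path is external from $K_1$ to $K_2$, hence odd, so prepending $a$ yields an induced external path from $K'_1$ to $K_2$ of even length, contradicting the second hypothesis. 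In your notation this amounts to replacing $N(w)\cap N(v)$ by $N(w)\setminus\{v\}$: that single change absorbs every potential chord endpoint $z_i\in N(w)$ into the cutset, eliminates the case analysis on $i$, and lets the parity argument you set up close immediately.
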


\begin{proof}
  Let $a  \in K'_1$  and $b  \in K_2$ be  non adjacent  vertices (they
  exist because $K_2$  is maximal).  Let $c$ be  any vertex of $V(K_1)
  \setminus  V(K'_1)$.  We  are going  to show  that $\{a\}  \cup N(a)
  \setminus  \{c\}$ is a  cutset of  $G$ separating  $c$ from  $b$. To
  prove this,  we check that  every induced path  $P$ from $c$  to $b$
  that has  no interior  vertex in $K_1$  contains a neighbour  of $a$
  different of $c$.  Indeed:
  
  If the interior of $P$ contains no vertex of $K_2$, then $P$ has odd
  length  because $\{K_1,  K_2\}$ is  an odd  pair of  cliques.  Since
  $\{K'_1, K_2\}$ is  an odd pair of cliques, there is  a chord in the
  even-length path $(a,  c, \dots, b)$, and this  chord is between $a$
  and a vertex of the interior of $P$.

  If the interior of $P$ contains  a vertex of $K_2$, then this vertex
  is the neighbour of $b$ in $P$: we denote it by $d$. We see that $ c
  \!-\! P \!-\!   d $ has odd length because $\{K_1,  K_2\}$ is an odd
  pair of  cliques.  So the path  $(a, c, \dots, d)$  has even length,
  and there is a chord between $a$ and a vertex of the interior of $P$
  (this chord can be $ad$).
\end{proof}

\newpage



\end{document}